\newtheorem{theorem}{Theorem}[section]
\newtheorem{lemma}[theorem]{Lemma}
\newtheorem{corollary}[theorem]{Corollary}
\newtheorem{question}[theorem]{Question}
\numberwithin{equation}{section}
\theoremstyle {definition}
\newtheorem{remark}[theorem]{Remark}
\DeclareMathOperator{\scal}{scal}
\DeclareMathOperator{\vol}{vol}
\DeclareMathOperator{\diam}{diam}
\DeclareMathOperator{\conj}{conj}
\DeclareMathOperator{\inj}{inj}
\DeclareMathOperator{\id}{id}
\newcommand{\eps}{\varepsilon}
\title[Strenghened injectivity radius bounds for psc
manifolds]{Strengthened injectivity radius bounds for manifolds with
  positive scalar curvature}
\author{Thomas Richard}
\address{LAMA, Univ Paris Est Creteil, Univ Gustave Eiffel, CNRS, F-94010, Créteil, France}
\email{thomas.richard@u-pec.fr}
\date{}
\begin{document}

\maketitle
\begin{abstract}
    Green's inequality shows that a compact
    Riemannian manifold with scalar curvature at least $n(n-1)$ has
    injectivity radius at most $\pi$, and that equality is achieved
    only for the radius 1 sphere. In this work we show how extra
    topological assumptions can lead to stronger upper bounds. The
    topologies we consider are
    $\mathbb{S}^2\times\mathbb{T}^{n-k-2}\times\mathbb{R}^k$ for
    $n\leq 7$ and $0\leq k\leq 2$ and 3-manifolds with positive scalar
    curvature except lens spaces $L(p,q)$ with $p$ odd. We also prove
    a strengthened inequality for $3$-manifolds with positive scalar
    curvature and large diameter. Our proof uses
    previous results of Gromov and Zhu.
    
\end{abstract}

In stark contrast with positive lower bounds on the sectional or Ricci
curvature, positive scalar curvature doesn't provide any control on
the volume or diameter of a Riemannian manifold $(M^n,g)$ of dimension
at least 3 as is already clear by considering product metrics on $\mathbb{S}^2\times\mathbb{T}^1$. However in 1963, Leon Green proved the following result:
\begin{theorem}\cite{GreenWieder}
    Let $(M^n,g)$ be a complete manifold with finite volume and Ricci
    curvature bounded from below. Assume \[\bar
      s=\frac{1}{\vol(M^n,g)}\int_M\scal_gdv_g>0.\]
    Then the conjugacy radius of $(M^n,g)$ satisfies
    \[\conj(M^n,g)\leq\pi\sqrt{\frac{n(n-1)}{\bar s}}.\]
    Moreover, if equality is achieved then $(M^n,g)$ has constant sectional curvature metric.
\end{theorem}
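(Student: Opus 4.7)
The plan is to argue by contradiction using an averaged index form estimate on the unit tangent bundle. Suppose $T := \conj(M^n,g) > \pi\sqrt{n(n-1)/\bar s}$; I will derive a contradiction. For each $v\in SM$, since $\gamma_v$ has no conjugate points on $[0,T]$, the index form is positive semidefinite on vector fields vanishing at the endpoints. Choose parallel orthonormal fields $E_1,\dots,E_{n-1}$ along $\gamma_v$, each orthogonal to $\gamma_v'$, and test with $V_i(t)=\sin(\pi t/T)E_i(t)$. Summing the resulting $n-1$ inequalities yields
\[
\frac{(n-1)\pi^2}{2T}\;\geq\;\int_0^T \sin^2(\pi t/T)\,\Ric(\gamma_v'(t),\gamma_v'(t))\,dt.
\]

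The second step is to integrate this inequality over $v\in SM$ against the Liouville measure $\mu$. The geodesic flow $\phi_t$ preserves $\mu$, so for each $t$
\[
\int_{SM}\Ric(\gamma_v'(t),\gamma_v'(t))\,d\mu(v)\;=\;\int_{SM}\Ric(v,v)\,d\mu(v).
\]
Splitting $\mu$ as $d\vol_g\otimes d\sigma_{\mathbb{S}^{n-1}}$ and using the pointwise identity $\int_{S_pM}\Ric(w,w)\,d\sigma(w)=\frac{\vol(\mathbb{S}^{n-1})}{n}\scal_g(p)$, the right-hand side equals $\frac{\vol(\mathbb{S}^{n-1})}{n}\,\bar s\,\vol(M^n,g)$. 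Combining with Fubini on the space-time integral $\int_0^T\!\!\int_{SM}$ and $\vol(SM)=\vol(\mathbb{S}^{n-1})\vol(M)$ gives $T^2\leq n(n-1)\pi^2/\bar s$, the desired contradiction.

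The main technical obstacle is justifying the integral manipulations when $M$ is non-compact: one needs $\Ric(v,v)$ to be $\mu$-integrable and Fubini's theorem to apply to the $[0,T]\times SM$ integral. This is precisely where the finite volume hypothesis together with the uniform lower Ricci bound are used: they bound $\Ric(v,v)$ below by an integrable constant against the finite measure $\mu$ on $SM$, while the positive part is controlled through the averaged scalar curvature.

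For the rigidity statement, if equality is achieved then $\sum_i I(V_i,V_i)=0$ for $\mu$-almost every $v$, so each $V_i$ is a Jacobi field along $\gamma_v$. Inserting $f(t)=\sin(\pi t/T)$ into $f''+K(E_i,\gamma_v')f=0$ forces $K(E_i,\gamma_v')\equiv(\pi/T)^2$ for every 2-plane, hence $(M,g)$ has constant sectional curvature $\bar s/(n(n-1))$. Since $M$ must then be a space form, completeness and finite volume identify it with a round sphere of the asserted radius.
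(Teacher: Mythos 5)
Your argument for the inequality itself is correct and is essentially the classical proof of Green's bound (the paper only cites \cite{GreenWieder} and gives no proof, so there is nothing internal to compare against): test the index form along each unit-speed geodesic with $\sin(\pi t/T)$ times a parallel orthonormal frame, sum to produce $\Ric(\gamma_v',\gamma_v')$, then average over the unit tangent bundle using invariance of the Liouville measure under the geodesic flow and the fibrewise identity $\int_{S_pM}\Ric(w,w)\,d\sigma(w)=\tfrac{\vol(\mathbb{S}^{n-1})}{n}\scal_g(p)$. The constants check out, and your use of the Ricci lower bound plus finite volume is indeed the right mechanism for the measure-theoretic step; to make it airtight, apply Tonelli to the nonnegative integrand $\sin^2(\pi t/T)\bigl(\Ric(\phi_t v,\phi_t v)+C\bigr)$, where $-C$ is the Ricci lower bound, which also shows a posteriori that $\int_M\scal_g\,dv_g<\infty$. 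Two small points: if $\conj(M,g)=\infty$ you cannot set $T=\conj$, so run the argument for an arbitrary finite $T$ below the conjugate radius (or above the claimed bound); and in the equality case you should say explicitly that absence of conjugate points in the open interval $(0,T)$ is what makes the index form positive semidefinite on $[0,T]$, so that each $I(V_i,V_i)\geq 0$ and vanishing of the sum forces each $V_i$ into the null space, i.e.\ to be a Jacobi field.

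The genuine gap is the final sentence of your rigidity argument. Equality does give constant sectional curvature $\bar s/(n(n-1))$ (after upgrading ``for $\mu$-a.e.\ $v$'' to all $v$ by continuity of the curvature tensor), but ``completeness and finite volume identify it with a round sphere'' is not a valid step: every spherical space form $\mathbb{S}^n/\Gamma$ with its constant curvature metric is complete with finite volume, and since conjugate points are governed by the Jacobi equation along geodesics, its conjugate radius equals that of its universal cover. Concretely, the round $\mathbb{RP}^n$ has $\bar s=n(n-1)$ and $\conj=\pi$, so it attains equality in the inequality you proved while not being a sphere. Thus your argument, as it stands, can only conclude that $(M^n,g)$ has constant positive sectional curvature, i.e.\ is a spherical space form; passing from there to the sphere requires an extra input (such as simple connectedness or additional information beyond the conjugate radius), and this caveat should accompany the equality statement as quoted.
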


Since the injectivity radius is smaller than the conjugate radius and
the round sphere is the only spherical space form with injectivity
radius $\pi$, a straightforward corollary is:
\begin{corollary}
  If a compact manifold $(M^n,g)$ has $\scal_g\geq n(n-1)$, then
  $\inj_g\leq\pi$. Moreover if equality is achieved then $(M^n,g)$ is
  isometric to the round sphere.
\end{corollary}

As with other sharp geometric inequalities, it is natural to ask if some form of stability occurs:
\begin{question}
 If a compact $(M^n,g)$ has $\scal_g\geq n(n-1)$ and injectivity radius close to $\pi$, is it close to the round sphere in some sense ?
\end{question}

A more precise (and less ambitious) version of this question is to look for a purely topological answer:
\begin{question}\label{questtop}
 If a compact $(M^n,g)$ has $\scal_g\geq n(n-1)$ and injectivity
 radius close to $\pi$, is it homeomorphic to the sphere ?
\end{question}
Some progress has been made in this direction by Tuschmann and Wiemeler in \cite{Tuschmann} under additional Ricci curvature and volume bounds.

Another way to look at this problem is to ask to ask for a
strenghening of Green's inequality in a given non trivial topological
context, namely:
\begin{question}
  Given a smooth $n$-manifold $M^n$ which is not homeomorphic to
  $\mathbb{S}^n$ and a metric $g$ on $M^n$ with $\scal_g\geq n(n-1)$,
  can one find $\iota=\iota(M^n)<\pi$ such that $\inj_g\leq \iota$ ?
\end{question}

To the knowledge of the
author, the only known result of this kind
follows from \cite{BBEN}, though the authors didn't phrase it in this
way, where it is shown that if $(\mathbb{RP}^3,g)$
has $\scal_g\geq 6$ then its $1$-systole is at most $\pi$ and hence
its injectivity radius is at most $\pi/2$. Gromov's
\cite{gromov2023scalar} also contains related conjecture and partial results.

To get a better feel of the problem we compile here the scalar
curvature and injectivity radius of some classical examples built from
rank one symmetric spaces and products in Table
\ref{tab:injex}, together with what the injectivity radius would be
after rescaling to have the same scalar curvature as the sphere.
\begin{table}[h]
  \centering
  \begin{tabular}{|l|c|c|c|}
    \hline
    Manifold $(M^n,g)$ & $\scal_g$ & $\inj_g$ &
                                                $\sqrt{\frac{\scal_g}{n(n-1)}}\inj_g$
    \\
    \hline
    $\mathbb{S}^n$ & $n(n-1)$ & $\pi$ & $\pi$\\
    $\mathbb{S}^{n-1}\times\mathbb{T}^1$ & $(n-1)(n-2)$ & $\pi$ & $\sqrt{\frac{n-2}{n}}\pi$\\
    $\mathbb{S}^{n-2}\times\mathbb{S}^2$, $n\geq 4$ & $n^2-5n+10$ & $\pi$ & $\sqrt{\frac{n^2-5n+10}{n(n-1)}}\pi$\\
    $\mathbb{RP}^n$ & $n(n-1)$ & $\pi/2$ & $\frac{\pi}{2}$\\
    $\mathbb{S}^2\times\mathbb{T}^{n-2}$ & $2$ & $\pi$
                                              & $\sqrt{\frac{2}{n(n-1)}}\pi$\\
    $\left(\mathbb{S}^2\right)^{n/2}$, $n$ even & $n$
         & $\pi$ & $\frac{1}{\sqrt{ n-1}}\pi$\\
    $\mathbb{CP}^{n/2}$, $n$ even & 
    $n(n+2)$  & $\pi/2$ & $\sqrt{\frac{n+2}{n-1}}\frac{\pi}{2}$ \\
    $\mathbb{HP}^{n/4}$, $n\equiv 0\ [4]$ &
                                                                 $n(n+8)$
                                   & $\pi/2$ &
                                               $\sqrt{\frac{n+8}{n-1}}\frac{\pi}{2}$ \\
    $\mathbb{OP}^2$, $n=16$ & $ 576$ & $\pi/2$ &
                                               $\sqrt{\frac{12}{5}}\frac{\pi}{2}$\\
    \hline
  \end{tabular}
  \caption{Injectivity radius of some classical examples of $n$-manifolds.}
  \label{tab:injex}
\end{table}

Of all the non spheres examples we have tested,
$\mathbb{S}^{n-1}\times\mathbb{T}^1$ always achieves the biggest injectivity
radius for a given scalar curvature bound for a given $n$, though in
dimension 4 it is tied with the Fubini-Study metric on $\mathbb{CP}^2$. For instance in dimension
$3$, $\mathbb{S}^2\times\mathbb{T}^1$ carries a (product) metric with scalar curvature $6$ and
injectivity radius $\pi/\sqrt{3}$.

We here prove some results in this direction.

In dimension 3 we get:
\begin{theorem}\label{thm3}
    Let $(M^3,g)$ be a compact $3$-manifold with $\scal_g\geq 6$ and $\inj_g>\frac{2\pi}{3}$, then $M^3$ is diffeomorphic to a lens space $L(p,q)$ with $p$ odd.
\end{theorem}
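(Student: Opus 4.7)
My plan is to combine Perelman's geometrisation with two scalar-curvature inputs—the BBEN injectivity-radius bound for $\mathbb{RP}^3$ recalled above, and a Gromov--Zhu width estimate for essential $2$-spheres—in order to eliminate every topological model except the odd lens spaces.

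First I would invoke Perelman's resolution of the geometrisation conjecture: any closed $3$-manifold carrying a positive scalar curvature metric is diffeomorphic to a finite connected sum of spherical space forms $S^3/\Gamma$, with $\Gamma\subset\mathrm{SO}(4)$ finite and acting freely, together with copies of $S^2\times S^1$ or its twisted analogue $S^2\tilde\times S^1$. Hence either $M$ is a single spherical space form $S^3/\Gamma$, or $\pi_2(M)\neq 0$. In the latter case I would pass to a cyclic cover $\hat M\to M$ whose deck group is generated by a loop dual to an essential $2$-sphere; the $\mu$-bubble machinery of Gromov and Zhu (the same ingredient the paper uses for its $\mathbb{S}^2\times\mathbb{T}^{n-k-2}\times\mathbb{R}^k$ statements) then bounds the width of a fundamental band of essential $S^2$-slices in $\hat M$ by a constant strictly less than $4\pi/3$. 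Projecting the deck translation back to $M$ produces a non-contractible loop of length below $4\pi/3$, forcing $\inj(M,g)<2\pi/3$, which contradicts the hypothesis.

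Once $M=S^3/\Gamma$ is all that remains, if $\Gamma$ contains the antipodal map $-\id$ then $\mathbb{RP}^3=S^3/\{\pm 1\}\to M$ is an intermediate Riemannian cover for the pulled-back metric; since Riemannian covers never decrease the injectivity radius, $\inj(\mathbb{RP}^3,\tilde g)\geq\inj(M,g)>2\pi/3>\pi/2$, contradicting the BBEN bound $\inj(\mathbb{RP}^3)\leq\pi/2$ under $\scal\geq 6$. So $\Gamma$ has no element of order $2$; by the Hopf classification of free finite group actions on $S^3$ this forces $\Gamma$ to be cyclic of odd order, and $M\cong L(p,q)$ with $p$ odd.

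The delicate step is the $\pi_2(M)\neq 0$ case. Gromov's classical $2\pi/n$ band-width inequality does not apply verbatim to $S^2$-bands since $S^2$ admits PSC of arbitrarily high intensity, so the proof must rely on the sharper $\mu$-bubble variants of Gromov and Zhu. Moreover, the cyclic cover must be chosen carefully so that the resulting translation-length bound genuinely descends to a systole bound on $(M,g)$: one has to verify that in every prime/connected-sum configuration left open by geometrisation—in particular non-trivial connected sums of finite-$\pi_1$ pieces and the twisted $S^2$-bundle over $S^1$—there is a loop dual to an essential $2$-sphere on which the width machinery can be brought to bear.
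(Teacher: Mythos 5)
Your endgame (spherical space form case) matches the paper: an order-two element of a freely acting $\Gamma\subset SO(4)$ must be $-\id$, the cover $\mathbb{RP}^3\to M$ plus the BBEN bound forces $\inj\leq\pi/2$, and odd order plus the classification of spherical space forms leaves only lens spaces $L(p,q)$ with $p$ odd. The gap is in your treatment of the case $\pi_2(M)\neq 0$, and it is not a verification detail but a step that fails. First, there is no band-width inequality for bands whose slices are essential $2$-spheres: the product of a round sphere of radius $\eps$ with a long cylinder has $\scal=2/\eps^2\geq 6$ and arbitrarily large width, and correspondingly in $\mathbb{S}^2(\eps)\times\mathbb{S}^1(L)$ the loop dual to the essential sphere has length $L$, arbitrarily large. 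So your claimed bound ``width of a fundamental band of essential $S^2$-slices $<4\pi/3$,'' hence a non-contractible loop of length $<4\pi/3$, is false; no systole bound of that kind follows from $\scal\geq 6$. What the $\mu$-bubble/stable-hypersurface machinery actually yields is a bound on the \emph{intrinsic diameter} of an essential sphere (Schoen--Yau/Gromov: a stable minimal, or symmetrized, essential $2$-sphere has diameter at most $2\pi/3$), and the contradiction with $\inj_g>2\pi/3$ is obtained not through a short geodesic loop but through contractibility: the essential sphere sits inside a metric ball of radius $\leq 2\pi/3$, and any such ball is contractible when the injectivity radius exceeds $2\pi/3$.

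Second, your cyclic cover does not even exist in one of the configurations you must handle: for a non-trivial connected sum of spherical space forms (all summands with finite fundamental group) the essential spheres are separating and null-homologous, $H_2(M)=0$, so there is no loop dual to an essential sphere and no associated infinite cyclic cover. This is exactly the case the paper treats by a different mechanism: by Hass--Scott one minimizes area among embedded spheres in a connected-sum neck that do not bound balls, obtaining a stable minimal immersed essential sphere, to which the Fischer-Colbrie--Schoen symmetrization plus Gromov's stabilized Bonnet--Myers estimate applies, giving diameter $\leq 2\pi/3$ and the contradiction above. For an $\mathbb{S}^2\times\mathbb{S}^1$ summand the paper instead collapses the other summands to get a non-zero degree map $M\to\mathbb{S}^2\times\mathbb{T}^1$ and invokes its Theorem \ref{thmn} (Zhu symmetrization plus Theorem \ref{thmGBM}), again a diameter-of-an-essential-sphere argument rather than a translation-length bound. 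To repair your proof you should replace the dual-loop/width step by these diameter estimates in both configurations.
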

Recall that the lens space $L(p,q)$ (defined for coprime integers $p$ and $q$) is the quotient of $\mathbb{S}^3\subset\mathbb{C}^2$ by the action of $\mathbb{Z}/p\mathbb{Z}$ generated by the diffeomorphism: 
\begin{equation}\label{action}
    (z,w)\mapsto\left(e^{i\tfrac{2\pi}{p}}z,e^{i\tfrac{2\pi q}{p}}w \right).
\end{equation}
The occurence of the odd lens spaces in the conclusion of the theorem
is probably not necessary and one could hope to conclude that $M^n$ is
actually diffeomorphic to $\mathbb{S}^3$.

Compactness of $M$ is actually not necessary since we will also show:
\begin{theorem}\label{thm3comp}
    Let $(M^3,g)$ be a complete non-compact $3$-manifold with $\scal_g\geq 6$, then $\inj_g\leq\frac{2\pi}{3}$.
\end{theorem}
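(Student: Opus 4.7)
\emph{Proof plan.} Suppose for contradiction that $(M^3,g)$ is a complete non-compact $3$-manifold with $\scal_g\geq 6$ and $\inj_g>\tfrac{2\pi}{3}$. The plan is to leverage non-compactness to produce an arbitrarily long ``band'' in $M$ and then to appeal to a sharp width estimate of Gromov--Zhu type for $3$-manifolds with $\scal\geq 6$, reaching a contradiction.

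First, I would fix a basepoint $p\in M$; completeness together with non-compactness provides a minimizing geodesic ray $\gamma\colon[0,\infty)\to M$ with $\gamma(0)=p$. Pick radii $r_0,\rho>0$ smaller than $\inj_g$, and for each large $T>0$ consider the compact cobordism
\[
\Omega_T := \overline{B(\gamma(T),T-\rho)}\setminus B(p,r_0),
\]
or more conveniently its smooth approximation, whose boundary consists of two smoothly embedded $2$-spheres $\Sigma_0=\partial B(p,r_0)$ and $\Sigma_T$ close to $\partial B(\gamma(T),\rho)$. By the injectivity radius assumption these are genuine spheres, and by the choice of $\gamma$ we have $\dist(\Sigma_0,\Sigma_T)\geq T-r_0-\rho$, which tends to infinity with $T$.

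The core analytic step is a stable $\mu$-bubble construction inside $\Omega_T$ in the spirit of Gromov's $\tfrac{2\pi}{n}$-inequality, adapted to the present ambient geometry by Zhu. One chooses a prescribed mean curvature function interpolating appropriately between the two boundary components and minimizes the associated functional among hypersurfaces in $\Omega_T$ homologous to $\Sigma_0$; the resulting stable $\mu$-bubble $\Sigma$ exists by compactness of $\Omega_T$. The stability inequality, combined with $\scal_g\geq 6$ and a Gauss--Bonnet argument carried out on the spherical components of $\Sigma$, forces the quantitative width bound $\dist(\Sigma_0,\Sigma_T)\leq\tfrac{2\pi}{3}$; comparing with the previous paragraph yields $T-r_0-\rho\leq\tfrac{2\pi}{3}$ for all $T$, which is absurd.

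The main obstacle is precisely the $\mu$-bubble step: one needs to verify the right existence and regularity for the stable bubble in $\Omega_T$, correctly handle the topology of $\Omega_T$ (whose cross-sections are themselves psc $2$-spheres, so that Gromov's classical bandwidth inequality does not apply naively), and track the sharp constant $\tfrac{2\pi}{3}$ throughout the stability analysis. This is where the detailed input from the prior work of Gromov and Zhu on scalar curvature and width estimates is essential.
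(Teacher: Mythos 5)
The decisive step of your plan is false as stated. You claim that a stable $\mu$-bubble/stability analysis inside $\Omega_T$ forces the width bound $\dist(\Sigma_0,\Sigma_T)\le\tfrac{2\pi}{3}$, where $\Omega_T$ is a band whose boundary components (and cross-sections) are $2$-spheres. No band-width inequality of this kind can hold for spherical bands: the round cylinder $\mathbb{S}^2\times\mathbb{R}$, scaled so that $\scal\equiv 6$, contains bands $\mathbb{S}^2\times[0,T]$ of arbitrarily large width. Gromov-type band inequalities such as Theorem \ref{ineqGro} need the band to be non-spherical (e.g.\ toroidal/over-torical); you flag this yourself as something to ``handle'', but it is not a technical point --- the estimate you want simply fails. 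Symptomatically, your final contradiction ($T-r_0-\rho\le\tfrac{2\pi}{3}$ for all $T$) never uses the hypothesis $\inj_g>\tfrac{2\pi}{3}$ except to know the boundary spheres are embedded, so if the argument worked it would exclude every complete non-compact $3$-manifold with $\scal\ge 6$, which the cylinder shows is absurd. (A smaller issue: $p$ lies outside $B(\gamma(T),T-\rho)$, so $\overline{B(\gamma(T),T-\rho)}\setminus B(p,r_0)$ is not the annular region you describe, and boundaries of large balls need not be spheres or even smooth; this part is fixable by taking level sets of a proper $1$-Lipschitz smoothing of $d(p,\cdot)$, as the paper does.)

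What is missing is the mechanism by which the injectivity radius actually enters. The paper applies the $\mu$-bubble symmetrization (Theorem \ref{thm-gro-mub}) to the band $\rho^{-1}([R,2R])$ between two far-apart level sets of such a smoothing $\rho$ of $d(p,\cdot)$; this does \emph{not} bound the width, but produces a separating surface $\Sigma$ in the homology class of a small sphere $S(p,r)$, together with a warping function making $\scal\ge\delta$ with $\delta\to 6$ as $R\to\infty$, which yields $\diam(\Sigma,g)\le\frac{2\pi}{3\sqrt{1-\frac{4\pi^2}{9(R-2)^2}}}$. The topological input (Lemma \ref{lemtopopen}) is that $[S(p,r)]\neq 0$ in $H_2(M\setminus\{p\})$ because $H_3(M)=0$ for non-compact $M$; hence $\Sigma$ is not contractible in $M\setminus\{p\}$, yet $\Sigma\subset\overline{B}(x,\diam(\Sigma,g))\subset M\setminus\{p\}$ for any $x\in\Sigma$. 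Since balls of radius less than $\inj_g$ are contractible, this forces $\inj_g\le\diam(\Sigma,g)$, and letting $R\to\infty$ gives $\inj_g\le\tfrac{2\pi}{3}$. To salvage your set-up you would need to replace the false width bound by this combination of a diameter estimate for the $\mu$-bubble and its homological non-triviality in $M\setminus\{p\}$.
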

Note that since no volume assumption is made, Green's upper bound of
$\pi$ does not apply here. However Gromov indicates in
\cite{gromov2023scalar} that a (possibly huge) upper bound can be
derived for complete $n$-manifolds with uniformly positive scalar
curvature with $n\leq 5$ by quantifying the contractibility argument
in \cite{ChodoshLi}.

The proof of Theorem \ref{thm3comp} can be adapted to yield an
improvement of Green's  bound for manifolds with big diameter. For convenience we
set:
\begin{align*}
  \bar D: (2\pi/3,\pi) &\to\mathbb{R}\\
  r&\mapsto
2r+\frac{2\pi}{3\sqrt{1-\tfrac{4\pi^2}{9r^2}}}.
\end{align*}
and include a plot of $\bar D $ as Figure \ref{fig:plotofphi}.
\begin{theorem}
     \label{thm3diam}
     
     Let $(M^3,g)$ be a compact Riemannian 3-manifold with $\scal_g\geq 6$ and let $r\in (\tfrac{2\pi}{3},\pi)$. If:
     \[\diam(M^3,g)>\bar D(r),\] then:
     \[\inj_g\leq r.\]
\end{theorem}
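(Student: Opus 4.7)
\emph{Proof plan.} The plan is to follow the strategy of Theorem~\ref{thm3comp} in quantitative form, with the diameter hypothesis replacing non-compactness. I would argue by contradiction: suppose $\inj_g > r$, and choose $p_0, p_1 \in M$ attaining the diameter, so $d(p_0, p_1) > \bar D(r)$; let $\gamma: [0, d(p_0, p_1)] \to M$ be a minimizing unit-speed geodesic from $p_0$ to $p_1$. Since $\inj_g > r$, the balls $B(p_i, r)$ are diffeomorphic to $3$-balls with smoothly embedded boundary $2$-spheres $S_i$, and since $d(p_0, p_1) > 2r$ they are disjoint. The complementary region $W := M \setminus (B(p_0, r) \cup B(p_1, r))$ contains the middle arc $\gamma|_{[r,\, d(p_0, p_1) - r]}$, whose length exceeds $\bar D(r) - 2r = \frac{2\pi/3}{\sqrt{1 - 4\pi^2/(9r^2)}}$.

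Next I would set up a warped $\mu$-bubble problem on $W$, with $S_0 \cup S_1$ providing the barrier data, following the template used in the proof of Theorem~\ref{thm3comp} and drawing on the Gromov \cite{gromov2023scalar} and Zhu techniques. The weight in the $\mu$-bubble should be chosen so that its prescribed mean curvature matches the model mean curvature of the geodesic sphere of radius $r$ at a point of the round $\mathbb{S}^3$ of curvature $1$; the required barrier inequality on $S_i$ then follows from $\scal_g \geq 6$ together with the fact that $r < \pi$ lies below the round conjugate radius. Running the usual stability and Riccati analysis on the resulting $2$-sphere $\mu$-bubble and combining it with Gauss-Bonnet and $\scal_g \geq 6$ should yield an upper width estimate on $W$ of exactly $\frac{2\pi/3}{\sqrt{1 - 4\pi^2/(9r^2)}}$. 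Since the middle arc of $\gamma$ lies in $W$ and has length strictly greater than this quantity, this produces the desired contradiction.

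The main obstacle will be threading the correction factor $\sqrt{1 - 4\pi^2/(9r^2)}$ through the $\mu$-bubble computation. I expect this to reduce to a Riccati-type ODE analysis near $S_0, S_1$ in which the logarithmic blow-up of the weight at each barrier contributes precisely the spectral correction $1 - (2\pi/(3r))^2$ to the effective scalar curvature on the middle band. Assembling this calculation and matching it with the geometric data on $S_i$ coming from $\inj_g > r$ will be the technical heart of the proof; all other steps are expected to be direct adaptations of the argument already established for Theorem~\ref{thm3comp}.
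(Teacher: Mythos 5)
Your setup is the right one (remove the balls $B(p_i,r)$ around two points realizing the diameter and work in the band $W$ whose boundary spheres are $D-2r>\bar D(r)-2r$ apart), and the numerical target --- that $D-2r$ cannot exceed $\frac{2\pi/3}{\sqrt{1-4\pi^2/(9r^2)}}$ when $\inj_g>r$ --- is exactly the inequality the paper ends up with. But the route you propose has a genuine gap. First, the barrier condition you invoke does not exist: a lower bound on \emph{scalar} curvature gives no comparison whatsoever for the mean curvature of geodesic spheres (that would require a Ricci lower bound), so the claim that ``the required barrier inequality on $S_i$ follows from $\scal_g\geq 6$ together with $r<\pi$'' is false; under $\scal_g\geq6$ and $\inj_g>r$ the second fundamental form of $S(p_i,r)$ is completely uncontrolled. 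Second, and more fundamentally, there is no unconditional width inequality for bands whose cross-sections are $2$-spheres --- cylinders $\mathbb{S}^2(1/\sqrt3)\times[0,L]$ have $\scal=6$ and arbitrarily large width --- so any width bound for $W$ must use $\inj_g>r$ through the global topology of $M$, not through boundary data near the $S_i$. The ``Riccati analysis with logarithmic blow-up of the weight contributing the spectral correction $1-(2\pi/(3r))^2$'' is precisely the unproved heart of your plan, and I do not see how to make it work as described.

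What the paper does instead is a two-stage argument in which the correction factor arises for a different reason. Since the band width $D-2r$ exceeds $2\pi/3$, Theorem~\ref{thm-gro-mub} applies with $\delta=6-\frac{8\pi^2}{3(D-2r)^2}>0$ and produces a separating surface $\Sigma\subset W$, homologous to $S(p,r)$ in $M\setminus\{p,q\}$, together with a warped product metric on $\Sigma\times\mathbb{R}$ of scalar curvature at least $\delta$; the Bonnet--Myers-type argument of Theorem~\ref{thmGBM} (via Theorem~\ref{ineqGro}, after rescaling) then gives $\diam(\Sigma,g)\leq \frac{2\pi}{3\sqrt{\delta/6}}$. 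The injectivity radius hypothesis enters only through topology: by Lemma~\ref{lemtop}, $[S(p,r)]\neq 0$ in $H_2(M\setminus\{p,q\})$, so $\Sigma$ cannot lie in any ball $B(x,r)$ with $x\in\Sigma$ (such a ball is contractible and misses $p$ and $q$), whence $r<\diam(\Sigma,g)$. Combining the two inequalities and solving for $D$ yields $D<\bar D(r)$, the desired contradiction. The ingredients missing from your proposal are thus the $\mu$-bubble symmetrization of the long band (with $\delta$ depending on the band width, not on boundary barriers) and the homological non-triviality of $S(p,r)$ in $M\setminus\{p,q\}$; without the latter there is nothing to prevent the symmetrized surface from being small and the band from being long, and no width estimate of the kind you want can hold.
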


\begin{figure}[h]
  \centering
  \includegraphics [scale=1]{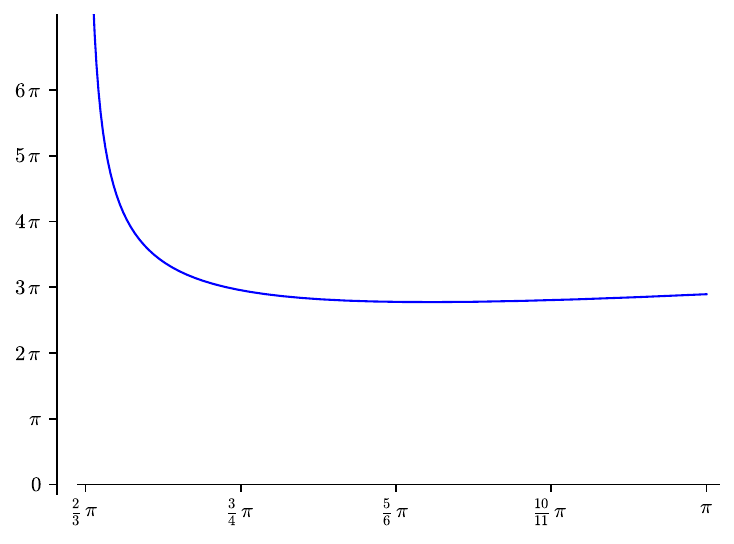}
  \caption{Plot of $\bar D$ on $(2\pi/3,\pi)$.}
  \label{fig:plotofphi}
\end{figure}

Since \[\bar D(r)\xrightarrow{r\to2\pi/3} +\infty,\]
one consequence of this result is that a manifold $(M^3,g)$ with
$\scal_g\geq 6$ and very big diameter cannot have injectivity radius
much bigger than $\frac{2\pi}{3}$.
On the other hand the smallest diameter for which the theorem applies can be found by
minimizing $\bar D$ on $(2\pi/3,\pi)$. Unfortunately
studying the sign of $\bar D '$ leads to a cubic equation
in $r$ whose
solutions are not pretty. However this minimum of $\bar D $ is close to $r=\tfrac{5\pi}{6}$ for which we get
$\bar D (r)=\tfrac{25\pi}{9}$. Hence we have that if $(M^3,g)$ has
$\scal_g\geq 6$ and $\diam(M^3,g)>\tfrac{25\pi}{9}\simeq 2.777\pi$ then
$\inj_g\leq \frac{5\pi}{6}\simeq 0.833\pi$. Numerically $\bar D $
attains its minimum of approximately $2.775\pi$ at $r\simeq 0.851\pi$.

Note that given our previous result, this last theorem is only
interesting if $M^3$ is an odd-order lens space, otherwise Theorem
\ref{thm3} gives a better estimate.

In dimension less than 7 we obtain:
\begin{theorem}\label{thmn}
    Let $3\leq n\leq 7$, $k\in\{0,1,2\}$ and let $M^n$ be a manifold such that there exists a proper map $F:M^n\to \mathbb{S}^2\times\mathbb{T}^{n-k-2}\times\mathbb{R}^k$, of non-vanishing degree. Let $g$ be a complete metric on $M^n$ with $\scal_g\geq n(n-1)$, then $\inj_g\leq \frac{2\pi}{n}$.
\end{theorem}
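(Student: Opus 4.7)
I would argue by contradiction, combining a $\mu$-bubble diameter estimate of Gromov--Zhu type with a topological non-vanishing argument.

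\emph{Topological input.} Pulling back via $F$ a Poincar\'e dual of $\{\mathrm{pt}\}\times\mathbb{T}^{n-k-2}\times\mathbb{R}^k \subset \mathbb{S}^2\times\mathbb{T}^{n-k-2}\times\mathbb{R}^k$ produces a compactly supported class $\beta\in H^{n-2}_c(M;\mathbb{Q})$ whose Poincar\'e dual $[\Sigma_0]\in H_2(M;\mathbb{Q})$ is non-zero, since pairing it with $F^{\ast}(\mathrm{vol}_{\mathbb{S}^2})$ computes $\deg(F)\neq 0$. In particular, no representative of $[\Sigma_0]$ can be contained in a contractible open subset of $M$.

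\emph{Geometric input.} By the $\mu$-bubble technology of \cite{gromov2023scalar} together with Zhu's Schoen--Yau--style descent available in the range $3\leq n\leq 7$, the class $[\Sigma_0]$ admits a smooth embedded representative $\Sigma\subset M$, realized as a stable minimizer of a suitable weighted area functional adapted to $F$, satisfying the ambient diameter bound
\[ \diam_g(\Sigma)\leq \tfrac{2\pi}{n}. \]
This constant arises by choosing the warping function in the $\mu$-bubble so that the stability ODE underlying the argument blows up after arclength $\pi/n$ on either side of $\Sigma$; the $\mathbb{S}^2$-factor of the target guarantees the minimizer is non-trivial in the relevant class, while the $\mathbb{T}^{n-k-2}\times\mathbb{R}^k$ factor carries the transverse topology needed to detect $[\Sigma_0]$. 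The bound $n\leq 7$ is the usual regularity threshold for minimizing $\mu$-bubbles.

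\emph{Contradiction.} Suppose $\inj_g>\tfrac{2\pi}{n}$ and pick $p\in\Sigma$. Then $\Sigma\subset \bar B(p,2\pi/n)\subset B(p,\inj_g)$, and the latter ball is diffeomorphic to an open Euclidean $n$-ball via $\exp_p$, hence contractible. Thus $[\Sigma]=0$ in $H_2(B(p,\inj_g);\mathbb{Q})=0$, and by naturality under the inclusion $B(p,\inj_g)\hookrightarrow M$ we get $[\Sigma]=0$ in $H_2(M;\mathbb{Q})$, contradicting the non-vanishing of $[\Sigma_0]$.

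The main obstacle is the geometric input: extracting the sharp ambient diameter estimate $\diam_g(\Sigma)\leq 2\pi/n$ from $\scal_g\geq n(n-1)$. This requires setting up the $\mu$-bubble functional with the correct warping profile in a tubular neighborhood of $\Sigma$, and exploiting the full dimension $n$ of the ambient (rather than $n-1$) via the codimension-$2$ contribution of the $\mathbb{S}^2$-direction in the target. The topological non-vanishing and the ball-diffeomorphism step are then essentially formal, and the minor bookkeeping around cases $k=0,1,2$ (compact versus non-compact $M$, use of ordinary versus compactly supported cohomology) is standard once $F$ is put into general position.
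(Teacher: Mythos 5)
Your overall skeleton coincides with the paper's: produce a homologically essential $2$-sphere in $M$ with ambient diameter at most $\tfrac{2\pi}{n}$, then contradict $\inj_g>\tfrac{2\pi}{n}$ because such a sphere would sit inside a contractible metric ball. The contradiction step is fine. The genuine gap is that the central geometric estimate is asserted rather than derived, and the mechanism you sketch for the constant is not the one that can produce it: a $\mu$-bubble whose warping profile blows up after arclength $\pi/n$ yields a \emph{band-width} bound (the two boundary components of a band are at distance at most $\tfrac{2\pi}{n}$), not a bound on the diameter of the two-dimensional minimizer itself. In the paper the diameter of $\Sigma$ is controlled by a different package: Zhu's descent (Theorem \ref{thmZhu}) produces not just the surface but also warping functions $f_1,\dots,f_{n-2}$ with $\scal\geq n(n-1)$ for the metric $g_{|\Sigma}+\sum_i f_i^2d\theta_i^2$ on $\Sigma\times\mathbb{T}^{n-2}$ (no useful lower bound holds for the intrinsic curvature of $\Sigma$ alone), and then Gromov's $\mathbb{T}^{n-2}$-invariant Bonnet--Myers theorem (Theorem \ref{thmGBM}), i.e.\ the toroidal band inequality (Theorem \ref{ineqGro}) applied to $\Sigma$ minus two small discs around two far points, crossed with the torus, gives $\diam(\Sigma,g_{|\Sigma})\leq\tfrac{2\pi}{n}$ and hence the ambient bound. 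Without carrying the warped-product data through the descent, your "stability ODE" heuristic does not reach the stated estimate, and you yourself flag this as the main obstacle.

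For $k=1,2$ the "standard bookkeeping" conceals further necessary arguments. Since $M$ is non-compact you cannot minimize a (weighted) area functional in the class $[\Sigma_0]$ directly: minimizing sequences may escape to infinity, and Zhu's theorem is a statement about compact manifolds. The paper instead cuts compact bands $r^{-1}([-L,L])$ along a proper function built from $F$, applies the $\mu$-bubble band theorem (Theorem \ref{thm-gro-mub}) to obtain a compact separating hypersurface carrying a warped $\mathbb{T}^1$-metric with scalar curvature only $\geq\delta_L$, where $\delta_L\to n(n-1)$ as $L\to\infty$; it needs Lemma \ref{lemtopn} to know that $F$ restricted to this hypersurface still has non-zero degree so that Zhu's theorem applies to the symmetrized manifold; and it needs the $\mathbb{T}^1$-equivariance of Zhu's construction so that the resulting sphere actually lies in $\Sigma_L\subset M$ with the metric induced by $g$ (otherwise the diameter bound lives in the auxiliary warped manifold, not in $M$). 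The conclusion $\inj_g\leq\tfrac{2\pi}{n}$ is then obtained only in the limit $L\to\infty$, not from a single surface with the sharp bound. A minor but real slip in your topological step: the class you should pull back is the compactly supported Poincar\'e dual of the compact cycle $\mathbb{S}^2\times\{\mathrm{pt}\}$ (an element of $H^{n-2}_c$ of the target), not of $\{\mathrm{pt}\}\times\mathbb{T}^{n-k-2}\times\mathbb{R}^k$, whose dual is a degree-$2$ class; with that correction the pairing argument detecting $[\Sigma_0]\neq 0$ works as intended.
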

For $k=0$, this result can be found in \cite [1.1
(b)]{gromov2023scalar} at least if $M^n$ is assumed to be homeomorphic
to a product $\mathbb{S}^2\times\mathbb{T}^{n-2}$, however the proof
is only sketched so we include it here.

When $k$ is not $0$, we say that $F:M^n\to
\mathbb{S}^2\times\mathbb{T}^{n-k-2}\times\mathbb{R}^k$ has
non-vanishing degree if the induced map on compactly supported $n$-dimensional de Rahm cohomology $F^*:H^n_c(\mathbb{S}^2\times\mathbb{T}^{n-k-2}\times\mathbb{R}^k)\to H^n_c(M^n)$ is not zero.

Both proofs use a Bonnet-Myers type inequality by Gromov for
$\mathbb{T}^{n-2}$-invariant positive scalar curvature metrics on
$\mathbb{S}^2\times\mathbb{T}^{n-2}$. In dimension 3, this inequality
is used to give a diameter estimate for stable immersed minimal
spheres. In higher dimension, we use a construction by Zhu
\cite{Zhu}. Our quantitative result (Theorem \ref{thm3diam}) and our
results for complete non compact manifolds (Theorem \ref{thm3comp} and
Theorem \ref{thmn} for $k\neq 0$) rely on the use of Gromov's
$\mu$-bubbles (see Theorem \ref{thm-gro-mub}).

It is at present unclear if this $\frac{2\pi}{n}$ bound is optimal: it comes
from the diameter estimate from \autoref{thmGBM}. One could believe that suitably rescaled product
metrics on $\mathbb{S}^2\times\mathbb{T}^{n-2}$ are optimal: these
have injectivity radius $\sqrt{\frac{2}{n(n-1)}}\pi$. However since \autoref{thmGBM} by itself is optimal as recently shown in \cite{HuXuZhang} different methods need to be used to investigate this question.

%Unlike Green's result which only requires positivity on average of the scalar curvature, our results require a pointwise lower bound on the scalar curvature and it is very unclear wether an average bound would be enough to prove theorems \ref{thm3}, \ref{thm3comp}, \ref{thm3diam} and \ref{thmn}. However slight modifications of these theorems hold when one weakens the assumption $\scal_g\geq  n(n-1)$ to the following spectral positivity assumption: 
%
%\begin{center}
%\emph{The operator $-2\Delta_g+\scal_g-n(n-1)$ is non-negative on $C^\infty_0(M^n)$.\quad $(\ast_n)$}
%\end{center}
%
%As an example we can show:
%\begin{theorem}\label{3stab}
%	Let $(M^3,g)$ be a complete manifold such that $(\ast_3)$ holds, then if $\inj_g>\frac{\pi}{\sqrt{2}}$ then $M^3$ is diffeomorphic to a lens space $L(p,q)$ with $p$ odd.
%\end{theorem}

%However we show
%in our appendix that product metrics are not optimal for Gromov's
%Bonnet-Myers type inequality, which shows that our method of proof
%cannot be used to yield optimality of products if it holds, we also
%explain in the appendix why this doesn't rule out optimality of
%products for \autoref{thmn}.

We now outline the rest of the paper. In section \ref{reminders} we
state the symmetrization results by Gromov and Zhu that we will need,
in section \ref{GBM} we spend some time discussing Gromov's toroidal
band inequality and Bonnet-Myers type inequality, in section
\ref{dimn} we prove Theorem \ref{thmn} for $k=0$, in section
\ref{dim3} we prove Theorem \ref{thm3} and explain why it is not obvious
to strengthen Theorem \ref{thm3} by removing the remaining lens spaces
from its conclusion. We then treat the case of
3-manifolds with big diameter in \autoref{dim3diam} and the case of
open 3-manifolds in \autoref{dim3diam}~\autoref{sec:inject-radi-open}. Theorem \ref{thmn} for $k\neq
0$ is treated in \autoref{sec:inject-radi-mathbbs2}. The non
optimality of products for Gromov's \autoref{thmGBM} is proved in \autoref{sec:non-optim-prod}.

\subsubsection*{Acknowledgment}
This work was carried out during the author's stay at PIMS-CNRS IRL in Vancouver, I thank PIMS and UBC's mathematics department for their hospitality. I thank W. Tuschmann for telling me about Question \ref{questtop} and
L. Bessières and A. Fraser for useful discussions. I also thank the referee for his valuable comments.

\section{Gromov's and Zhu's symmetrizations}
\label{reminders}
We state two symmetrization results based on the stable minimal hypersurface method. 
The first is Zhu's construction, which is an elegant application of Fischer-Colbrie--Schoen symmetrization to $\mathbb{S}^2\times\mathbb{T}^{n-2}$.
\begin{theorem}\cite[Prop. 2.2]{Zhu}\label{thmZhu}
     Let $3\leq n\leq 7$ and let $M^n$ such that there exists a
     non-zero degree map
     $F:M^n\to\mathbb{S}^2\times\mathbb{T}^{n-2}$. Let $g$ be a
     riemannian metric on $M^n$ such that $\scal_g\geq n(n-1)$, then
     there exists a genus 0 surface $\Sigma^2\subset M^n$ such that:
     \begin{itemize}
     \item $\int_{\Sigma}F^*\sigma\neq 0$ where $\sigma$ is the
       fundamental cohomology class of
       $\mathbb{S}^2\subset\mathbb{S}^2\times\mathbb{T}^{n-2}$.
       \item there are $n-2$ smooth positive functions $f_1,\dots,f_{n-2}:\Sigma^2\to\mathbb{R}$ such that $\Sigma^2\times\mathbb{T}^{n-2}$ endowed with the metric $\tilde{g}=g_{|\Sigma}+\sum_{i=1}^{n-2}f_i^2d\theta_i^2$ satisfies $\scal_{\tilde{g}}\geq n(n-1)$.
     \end{itemize}
\end{theorem}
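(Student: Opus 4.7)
The plan is to prove this by iterated Fischer-Colbrie--Schoen symmetrization, peeling off one $\mathbb{S}^1$ factor of the target torus at each of $n-2$ stages. I propose to construct, for $k = 0, \ldots, n-2$, a smooth oriented $(n-k)$-dimensional manifold $N_k$ with a metric $g_k$, positive functions $u_1, \ldots, u_k \in C^\infty(N_k)$, and a smooth map $F_k : N_k \to \mathbb{S}^2 \times \mathbb{T}^{n-k-2}$ of non-zero degree, such that the warped product $N_k \times \mathbb{T}^k$ with metric $g_k + \sum_{i=1}^k u_i^2\, d\theta_i^2$ has $\scal \geq n(n-1)$. Stage $0$ is the hypothesis, and stage $n-2$ yields $\Sigma^2 := N_{n-2}$ with $f_i := u_i$, while the non-zero degree of $F_{n-2} : \Sigma \to \mathbb{S}^2$ immediately gives $\int_\Sigma F^*\sigma \neq 0$.

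For the induction step, I compose $F_k$ with projection to the last $\mathbb{S}^1$ factor of the target to obtain a circle-valued map $\alpha_k$. The Poincaré dual of $\alpha_k^*[d\theta]$ is a non-trivial codimension-1 homology class $\beta_k \in H_{n-k-1}(N_k; \mathbb{Z})$: any smooth representative pairs with $F_k^*(\sigma \wedge d\theta_1 \wedge \cdots \wedge d\theta_{n-k-3})$ to give $\deg(F_k) \neq 0$, hence it inherits a non-zero degree map $F_{k+1}$ to $\mathbb{S}^2 \times \mathbb{T}^{n-k-3}$. Within $\beta_k$ I minimize the weighted area $\Sigma \mapsto \int_\Sigma u_1 \cdots u_k \, d\!\area_{g_k}$, which is exactly the Riemannian area in $N_k \times \mathbb{T}^k$ of the $\mathbb{T}^k$-invariant hypersurface $\Sigma \times \mathbb{T}^k$. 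Because the ambient dimension is at most $n \leq 7$, standard geometric measure theory yields a smooth embedded two-sided orientable minimizer $N_{k+1} \subset N_k$, and I take $u_{k+1}$ to be the first positive eigenfunction of its weighted Jacobi operator (equivalently, of the ordinary Jacobi operator of $N_{k+1} \times \mathbb{T}^k$ in $N_k \times \mathbb{T}^k$). The classical Fischer-Colbrie--Schoen computation, using the Gauss equation with $H = 0$ together with the eigenvalue equation with $\lambda_1 \geq 0$, gives
\[
\scal_{\text{new warped}} \;=\; \scal_{\text{old warped}} + |A|^2 + 2\lambda_1 \;\geq\; n(n-1),
\]
so the induction closes with no loss of constant.

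After $n-2$ stages the 2-surface $\Sigma = N_{n-2}$ carries the desired warpings, and non-zero degree of $F_{n-2} : \Sigma \to \mathbb{S}^2$ forces it to be orientable. To obtain genus $0$, I would plug the weight $u_1 \cdots u_{n-3}$ into the final stability inequality and apply Gauss--Bonnet, exactly as in Fischer-Colbrie--Schoen's original rigidity argument, to get $\int_\Sigma K_\Sigma > 0$ and hence $\Sigma \cong \mathbb{S}^2$. The main delicate point throughout is the scalar-curvature bookkeeping across stages: one must check that the eigenfunction equation for $u_{k+1}$ exactly cancels the bad Ricci terms produced by the already-built warpings $u_1, \ldots, u_k$. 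The trick that makes this painless is to lift each symmetrization to the $\mathbb{T}^k$-equivariant ambient $N_k \times \mathbb{T}^k$, so that only the standard one-step Fischer-Colbrie--Schoen identity is required at every stage.
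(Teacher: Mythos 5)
Your proposal is essentially Zhu's construction, which is exactly what the paper invokes (it cites \cite[Prop.\ 2.2]{Zhu} without reproving it and then describes the same nested family of weighted-area-minimizing hypersurfaces Poincaré dual to the circle factors, with the warping functions given by first eigenfunctions of the invariant Jacobi operators). The inductive lift to the $\mathbb{T}^k$-equivariant ambient, the degree bookkeeping via pairing with $F_k^*(\sigma\wedge d\theta_1\wedge\cdots)$, and the one-step Fischer-Colbrie--Schoen identity are all the same as in the cited proof, so your argument is correct and follows the paper's route.
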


$\Sigma$ is such that there exists nested $k$-dimensional submanifolds
$M_k$ for $2\leq k\leq n$ with:
\begin{itemize}
\item $\Sigma^2=M_2\subset M_3\subset\cdots \subset M_n=M$,
\item $M_{k-1}\times\mathbb{T}^{n-k}$ is a stable minimal surface in $(M_k\times\mathbb{T}^{n-k},g_{|M_k}+\sum_{i=1}^{n-k}f_{i,k}^2d\theta_i^2)$ for some positive smooth functions $f_{i,k}:M_k\to\mathbb{R}$.
\item If $\pi_k$ denotes the projection
  $\mathbb{S}^2\times\mathbb{T}^{n-2}\to\mathbb{S}^2\times\mathbb{T}^{k-2}$
  then $F_k=\pi_k\circ F_{|M_k}:M_k\to\mathbb{S}^2\times\mathbb{T}^{k-2}$
  has non zero degree.
  \item $M_{k-1}\subset M_k$ is Poincaré dual to $F_k^*d\theta_k$ where
    $\theta_k$ is a coordinate on the last factor of $\mathbb{S}^2\times\mathbb{T}^{k-2}$
\end{itemize}
Moreover, each metric $g_{|M_k}+\sum_{i=1}^{n-k}f_{i,k}^2d\theta_i^2$ has scalar curvature at least $n(n-1)$.

We now state a symmetrization theorem obtained by Gromov using $\mu$-bubbles for manifolds with positive scalar curvature with two well separated boundary components.
\begin{theorem}\cite[Section 3.7]{Gromov4}\label{thm-gro-mub}
  Let $(M^n,\partial_{\pm},g)$ $(n\leq 7)$ be a Riemannian band. Assume that
  \[\scal_g\geq \frac{4(n-1)\pi^2}{n\, d_g(\partial_-,
      \partial_+)^2}+\delta\]
  for some $\delta>0$. Then there exists
  \begin{itemize}
  \item an hypersurface $\Sigma$ which separates
    $\partial_-$ and $\partial_+$,
  \item a positive function
    $u:\Sigma\to\mathbb{R}$,
  \end{itemize}
  such that the metric
  $h=g|_{\Sigma}+u^2dt^2$ on $\Sigma\times\mathbb{R}$ has $\scal_h\geq \delta$.
\end{theorem}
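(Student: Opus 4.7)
My plan is to follow Gromov's $\mu$-bubble strategy: first obtain $\Sigma$ as a minimizer of a weighted area functional whose weight $h_0$ blows up at both boundary components, and then extract the warping function $u$ from the second variation of that functional via a Fischer--Colbrie--Schoen type eigenfunction.

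Write $L = d_g(\partial_-, \partial_+)$ and choose a smooth function $\rho \colon M \to [-L/2, L/2]$ with $\rho \equiv \mp L/2$ on $\partial_\mp$ and $|\nabla \rho| \leq 1$ (a smoothing of the signed distance to a mid-level hypersurface). Set
\[\phi(t) = \frac{2(n-1)\pi}{nL}\tan\!\left(\frac{\pi t}{L}\right), \qquad h_0 := \phi\circ\rho.\]
A direct computation yields the pointwise identity
\[2\phi'(t) - \frac{n}{n-1}\phi(t)^2 = \frac{4(n-1)\pi^2}{nL^2},\]
and $h_0$ diverges to $\mp\infty$ at $\partial_\pm$. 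I would then consider the functional
\[\mathcal{A}(\Omega) = \mathcal{H}^{n-1}(\partial^*\Omega) - \int_M (\chi_\Omega - \chi_{\Omega_0})\,h_0\, dv_g\]
on Caccioppoli sets $\Omega \subset M$ coinciding with a reference set $\Omega_0$ in collar neighborhoods of $\partial_\pm$. The divergence of $h_0$ at the boundaries forces any minimizer to stay strictly inside $M$, since the penalty from the weight beats the area cost of pulling a candidate slab back into the interior. Standard GMT together with $n \leq 7$ then produces a smooth minimizing hypersurface $\Sigma$ separating $\partial_-$ from $\partial_+$.

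The first variation gives $H_\Sigma = h_0|_\Sigma$ (outer normal $\nu$ to $\Omega$), and the second variation gives the stability inequality
\[\int_\Sigma |\nabla_\Sigma \varphi|^2 \, dA \geq \int_\Sigma \bigl(|A|^2 + \Ric_g(\nu,\nu) + \partial_\nu h_0\bigr)\varphi^2 \, dA, \qquad \varphi\in C^\infty(\Sigma).\]
This is exactly the statement that the Jacobi--type operator $-\Delta_\Sigma - (|A|^2 + \Ric_g(\nu,\nu) + \partial_\nu h_0)$ has nonnegative principal eigenvalue, so by the Fischer--Colbrie--Schoen trick there exists $u\in C^\infty(\Sigma)$ with $u > 0$ and
\[-\Delta_\Sigma u \geq \bigl(|A|^2 + \Ric_g(\nu,\nu) + \partial_\nu h_0\bigr)u.\]

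Finally, for the warped metric $h = g|_\Sigma + u^2\, dt^2$ on $\Sigma\times\mathbb{R}$, the one-dimensional warped-product formula gives $\scal_h = \scal_\Sigma - 2\Delta_\Sigma u / u$. Combining this with the inequality for $u$, the Gauss equation $\scal_g = \scal_\Sigma + 2\Ric_g(\nu,\nu) + |A|^2 - H^2$, the prescription $H = h_0$, and the Cauchy--Schwarz bound $|A|^2 \geq h_0^2/(n-1)$, one obtains
\[\scal_h \geq \scal_g + \frac{n}{n-1}h_0^2 + 2\partial_\nu h_0.\]
Since $|\partial_\nu h_0| \leq \phi'(\rho)\,|\nabla \rho| \leq \phi'(\rho)$, the pointwise identity on $\phi$ gives $\frac{n}{n-1}h_0^2 + 2\partial_\nu h_0 \geq -\frac{4(n-1)\pi^2}{nL^2}$, and the hypothesis closes the estimate to $\scal_h \geq \delta$. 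The main obstacle is the existence and regularity of the $\mu$-bubble together with the barrier argument ensuring that the divergence of $h_0$ at $\partial_\pm$ really forces the minimizer into the interior; once this analytic piece is in place, the curvature identities fit together so snugly that the exact constant $4(n-1)\pi^2/n$ in the hypothesis appears from the choice of $\phi$ alone.
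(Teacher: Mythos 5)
The paper does not prove this statement—it is quoted directly from Gromov (\cite[Section 3.7]{Gromov4})—and your sketch is precisely the standard $\mu$-bubble argument by which the cited result is established: weight $h_0=\phi\circ\rho$ blowing up at $\partial_\pm$, first variation $H=h_0$, stability plus Fischer--Colbrie--Schoen, the traced Gauss equation, $|A|^2\geq H^2/(n-1)$, and the identity $2\phi'-\tfrac{n}{n-1}\phi^2=\tfrac{4(n-1)\pi^2}{nL^2}$; all of these computations check out. The only caveats are the standard technical ones you already flag (existence/regularity for $n\leq 7$ and the barrier argument, plus the fact that smoothing the distance function only gives $|\nabla\rho|\leq 1+\varepsilon$ and that $\rho$ must be kept strictly inside $(-L/2,L/2)$ on the interior so $h_0$ is finite there—both absorbed using the slack $\delta>0$), so the proposal is correct and follows essentially the same route as the cited proof.
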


\section{Gromov's toroidal band inequality and stabilized Bonnet--Myers theorem}
\label{GBM}
We first present Gromov's inequality on positive scalar curvature metrics on $[-1,1]\times\mathbb{T}^{n-1}$ :
\begin{theorem}\label{ineqGro}
    Let $n\geq 2$ and $g$ be a metric on $[-1,1]\times\mathbb{T}^{n-1}$ with $\scal_g\geq n(n-1)$ then:
    \[d_g\left(\{-1\}\times\mathbb{T}^{n-1},\{+1\}\times\mathbb{T}^{n-1}\right)\leq\frac{2\pi}{n}\]
\end{theorem}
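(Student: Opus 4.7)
The natural approach is to argue by contradiction, combining Gromov's $\mu$-bubble symmetrization (\autoref{thm-gro-mub}) with the Schoen--Yau torus obstruction to positive scalar curvature.

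Suppose for contradiction that
\[
d := d_g\bigl(\{-1\}\times\mathbb{T}^{n-1},\,\{+1\}\times\mathbb{T}^{n-1}\bigr) > \tfrac{2\pi}{n}.
\]
Then
\[
\frac{4(n-1)\pi^2}{n\,d^2} < \frac{4(n-1)\pi^2}{n\,(2\pi/n)^2} = n(n-1) \leq \scal_g,
\]
so the scalar curvature hypothesis of \autoref{thm-gro-mub} holds with $\delta := n(n-1) - \frac{4(n-1)\pi^2}{n\,d^2} > 0$. Applying that theorem to the band $([-1,1]\times\mathbb{T}^{n-1},\partial_\pm,g)$ produces a hypersurface $\Sigma^{n-1}$ separating the two boundary tori, together with a smooth positive function $u\colon\Sigma\to\mathbb{R}$, such that $g|_\Sigma + u^2\,dt^2$ is a metric on $\Sigma\times\mathbb{R}$ of scalar curvature at least $\delta$. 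Since $u$ is $t$-independent, this metric descends to the quotient $\Sigma\times S^1$, realizing $\Sigma\times S^1$ as a closed $n$-manifold of positive scalar curvature.

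Next I would exploit the topology of the band. Since $\Sigma$ separates the two boundary tori of $[-1,1]\times\mathbb{T}^{n-1}$ it is homologous to each of them in $H_{n-1}([-1,1]\times\mathbb{T}^{n-1};\mathbb{Z})\cong\mathbb{Z}$, so the restriction to $\Sigma$ of the projection to the $\mathbb{T}^{n-1}$-factor has non-zero degree. Crossing with $\mathrm{id}_{S^1}$ yields a non-zero degree map
\[
\Sigma\times S^1 \longrightarrow \mathbb{T}^{n-1}\times S^1 = \mathbb{T}^n.
\]
By the Schoen--Yau/Gromov--Lawson theorem, no closed $n$-manifold admitting such a map to $\mathbb{T}^n$ can carry a positive scalar curvature metric, contradicting the previous paragraph and giving the bound.

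The main subtlety to keep in mind is the dimension restriction: \autoref{thm-gro-mub} is stated for $n\leq 7$, and the classical form of the Schoen--Yau torus obstruction is proved in the same range. In that range the plan is complete as written; for $n\geq 8$ one would have to invoke either Schoen's singular hypersurface analysis to handle the codimension-$\geq 7$ singular set of the $\mu$-bubble, or the Dirac/$K$-theoretic approach of Gromov--Lawson--Rosenberg, the torus obstruction itself being dimension-independent.
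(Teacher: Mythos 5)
Your argument is correct in the range where the tools you invoke exist, and it is essentially the $\mu$-bubble route that the paper only alludes to: the paper does not prove Theorem \ref{ineqGro} at all, but quotes it, citing Gromov's original stable-hypersurface/Fischer-Colbrie--Schoen proof for $n\leq 7$ \cite{GromovM} and Cecchini's Dirac-operator proof in higher dimensions \cite{Cecchini}, while remarking that the $\mu$-bubble theorem (Theorem \ref{thm-gro-mub}) also yields it. What you do differently from the usual $\mu$-bubble derivation is to stop after a single symmetrization: instead of iterating the warped-product reduction down in dimension, you compactify the $\mathbb{R}$-factor (legitimate, since $u$ is $t$-independent) and feed the closed psc manifold $\Sigma\times S^1$, with its nonzero-degree map to $\mathbb{T}^n$, into the Schoen--Yau torus obstruction; this is a clean soft argument, and your numerology $\delta=n(n-1)-\tfrac{4(n-1)\pi^2}{n\,d^2}>0$ is right. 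Two small points would deserve a sentence in a written version: the $\mu$-bubble hypersurface may be disconnected, so one should keep only the components bounding the region containing $\partial_-$ (their total class is the generator of $H_{n-1}$, so some component maps to $\mathbb{T}^{n-1}$ with nonzero degree and still carries the warped psc metric), and two-sidedness of $\Sigma$ in the orientable band is what guarantees orientability, hence a well-defined degree. Finally, note that your proof as written gives the theorem only for $2\leq n\leq 7$, since both Theorem \ref{thm-gro-mub} and the classical torus obstruction are used in that range; this covers every use the paper makes of the inequality, but the statement is for all $n\geq 2$, and for $n\geq 8$ the reference the paper itself relies on is Cecchini's Dirac-method band estimate \cite{Cecchini} (the Gromov--Lawson--Rosenberg route requires a spin hypothesis that $\Sigma\times S^1$ need not satisfy, while the Schoen--Yau singular-minimal-hypersurface option is as you describe).
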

Note that if $dx^2$ is a flat metric on $\mathbb{T}^{n-1}$ then the metric $dt^2+\left(\cos\frac{nt}{2}\right)^{4/n}dx^2$ on $(-\tfrac{\pi}{n},\tfrac{\pi}{n})\times\mathbb{T}^{n-1}$ has constant scalar curvature equal to $n(n-1)$ while its boundary components are $\frac{2\pi}{n}$ apart, hence the inequality is optimal.

This inequality was first proved in dimension $n\leq 7$ using the stable hypersurface method and Fischer-Colbrie--Schoen symmetrization in \cite{GromovM}, and was subsequently expanded to higher dimension by Cecchini \cite{Cecchini} using Dirac operators methods. 
It can also be proved using $\mu$-bubbles, which was Gromov's original use of Theorem \ref{thm-gro-mub}.
For an in-depth discussion of this inequality and its various generalizations, see sections 3.6 to 3.8 of \cite{Gromov4}.

We now state (and prove for convenience of the reader) Gromov's Bonnet-Myers theorem on $\mathbb{T}^{n-2}$-invariant positive scalar curvature metrics on $\mathbb{S}^2\times\mathbb{T}^{n-2}$ and make some remarks on its optimality.
\begin{theorem}[{\cite[2.8]{Gromov4}}]\label{thmGBM}
     Let $g$ be a metric on $\mathbb{S}^{2}$ and $f_1,\dots,f_{n-2}:\mathbb{S}^{2}\to\mathbb{R}$ be smooth positive functions such that the metric $\tilde g=g+\sum_{i=1}^{n-2}f_i^2d\theta_i^2$ on $\mathbb{S}^2\times\mathbb{T}^{n-2}$ has $\scal_{\tilde{g}}\geq n(n-1)$. Then:
     \[\diam(\mathbb{S}^2,g)\leq\frac{2\pi}{n}.\]
\end{theorem}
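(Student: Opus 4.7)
The plan is to deduce the diameter bound directly from Gromov's toroidal band inequality (\autoref{ineqGro}) by realising a long geodesic on $\mathbb{S}^2$ as a long torical band in $\mathbb{S}^2\times\mathbb{T}^{n-2}$. Fix points $p,q\in\mathbb{S}^2$ with $d_g(p,q)=\diam(\mathbb{S}^2,g)$, and for small $\epsilon>0$ (smaller than half the injectivity radius of $g$ at $p$ and $q$) let $B_p,B_q\subset\mathbb{S}^2$ be the open geodesic balls of radius $\epsilon$ around $p$ and $q$. The complement $A_\epsilon=\mathbb{S}^2\setminus(B_p\cup B_q)$ is a smooth annulus, so
\[ W_\epsilon := A_\epsilon\times\mathbb{T}^{n-2} \]
is a compact Riemannian band diffeomorphic to $[-1,1]\times\mathbb{T}^{n-1}$, with boundary components $\partial_\pm W_\epsilon=\partial B_{p/q}\times\mathbb{T}^{n-2}$.

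The restriction of $\tilde g$ to $W_\epsilon$ retains the warped product form $g|_{A_\epsilon}+\sum_i f_i^2 d\theta_i^2$ and inherits the lower bound $\scal_{\tilde g}\geq n(n-1)$, so \autoref{ineqGro} yields
\[ d_{\tilde g}(\partial_- W_\epsilon,\partial_+ W_\epsilon)\leq \frac{2\pi}{n}. \]
Conversely, for any piecewise smooth curve $\gamma(t)=(\alpha(t),\beta(t))$ in $\mathbb{S}^2\times\mathbb{T}^{n-2}$ the block form $|\dot\gamma|_{\tilde g}^2=|\dot\alpha|_g^2+\sum_i f_i(\alpha)^2|\dot\beta_i|^2$ forces $L_{\tilde g}(\gamma)\geq L_g(\alpha)$, so the projection to $\mathbb{S}^2$ shows
\[ d_{\tilde g}(\partial_- W_\epsilon,\partial_+ W_\epsilon)\geq d_g(\partial B_p,\partial B_q)\geq d_g(p,q)-2\epsilon. \]
Combining the two inequalities and sending $\epsilon\to 0$ gives $\diam(\mathbb{S}^2,g)\leq 2\pi/n$.

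There is no real obstacle: the whole content of the argument is that the sphere diameter is bounded by the width of an auxiliary toric band, which \autoref{ineqGro} controls. The two minor points to verify are the projection inequality $L_{\tilde g}(\gamma)\geq L_g(\alpha)$, which is immediate from positivity of the $f_i$, and that $W_\epsilon$ meets the hypotheses of \autoref{ineqGro}, which is clear by construction. Compactness of $\mathbb{S}^2$ ensures the supremum defining $\diam(\mathbb{S}^2,g)$ is realised by some pair $(p,q)$, so the argument applies.
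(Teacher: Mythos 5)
Your proof is correct and follows essentially the same route as the paper: remove small balls around two diameter-realizing points, view the complement times $\mathbb{T}^{n-2}$ as a toric band to which Theorem \ref{ineqGro} applies, and use the fact that projection to the $\mathbb{S}^2$ factor is distance non-increasing to transfer the width bound to $d_g(p,q)-2\epsilon$ before letting $\epsilon\to 0$.
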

\begin{proof}
    Consider two points $p_-,p_+\in\mathbb{S}^2$ such that $d_g(p_-,p_+)=\diam(\mathbb{S}^2,g)$ and set $B_+=B_g(p_+,\eps)$ and $B_-=B_g(p_-,\eps)$ for some positive $\eps$ less than half the diameter. 
    
    Set $M^n=\mathbb{S}^2\backslash (B_+\cup B_-)\times\mathbb{T}^{n-2}$. Since $\mathbb{S}^2\backslash (B_+\cup B_-)$ is diffeomorphic to $[-1,1]\times\mathbb{T}^1$, $M^n$ is diffeomorphic to $[-1,1]\times\mathbb{T}^{n-1}$, where the boundary component $\{\pm 1\}\times\mathbb{T}^{n-1}$ corresponds to $C_{\pm}\times\mathbb{T}^{n-2}$ (where $C_\pm=\partial B_\pm$). By assumption $(M^n,\tilde g)$ has $\scal_{\tilde g}\geq n(n-1)$ hence Theorem \ref{ineqGro} gives that:
    \[d_{\tilde g}(C_{-}\times\mathbb{T}^{n-2},C_{+}\times\mathbb{T}^{n-2})\leq\frac{2\pi}{n}\]
    
    We have that $d_{\tilde g}(C_{-}\times\mathbb{T}^{n-2},C_{+}\times\mathbb{T}^{n-2})=d_{ g}(C_{-},C_{+})$. To see this consider points $x_\pm\in C_{\pm}\times\mathbb{T}^{n-2}$ and any curve $c:[a,b]\to M^n$ from $x_-$ to $x_+$ and write it as $c=(\gamma,\theta_1,\dots,\theta_{n-2})$ where $\gamma:[a,b]\to \mathbb{S}^2\backslash (B_+\cup B_-)$, then estimate the length of $c$ by:
    \begin{align*}
        \mathcal{L}_{\tilde g}(c)&=\int_a^b\sqrt{g(\dot\gamma,\dot\gamma)+\sum_{i=1}^{n-2}(f_i\circ\gamma)^2\dot\theta_i^2}dt\\
        &\geq \int_a^b\sqrt{g(\dot\gamma,\dot\gamma)}dt=\mathcal{L}_{g}(\gamma)
    \end{align*}
    which shows after taking infimums that \[d_{g}(C_-,C_+)\leq
      d_{\tilde
        g}\left(C_-\times\mathbb{T}^{n-2},C_-\times\mathbb{T}^{n-2}\right).\]
    Equality follows from taking $c$ with $\mathbb{S}^2$ component a minimizing geodesic and constant $\mathbb{T}^{n-2}$ component.
    
    This shows that $\diam(\mathbb{S}^2,g)-2\eps= d_{g}(C_-,C_+)\leq \frac{2\pi}{n}$ for any small enough $\eps$, hence the Theorem is proved.
\end{proof}

\begin{remark} When this paper was written the optimality of the $\tfrac{2\pi}{n}$ in Theorem \ref{thmGBM} was an open problem and previous versions of this article included an inconclusive discussion of this problem with a proof that product metrics could not be optimal given as Appendix \ref{sec:non-optim-prod}. Since then, Hu, Xu and Zhang showed in \cite{HuXuZhang} that the $\frac{2\pi}{n}$ upper bound in Theorem \ref{thmGBM} is indeed optimal. Their proof is worded in the context of a diameter bound for stable minimal surfaces in 3-manifolds with positive scalar curvature (as in Lemma \ref{diamEst}) but the method extends to show optimality in any dimension. These example do not change the conclusions of the present paper as the sequence of examples which show optimality have injectivity radius going to zero. I keep the appendix included because the local non optimality of products doesn't follow from  \cite{HuXuZhang} and the computations could be of some use. 
\end{remark}
\section{Injectivity radius bound for $\mathbb{S}^2\times\mathbb{T}^{n-2}$}
\label{dimn}

We now prove Theorem \ref{thmn} for $k=0$.

\begin{proof}
    Let $M^n$ be an $n$-manifold with a nonvanishing degree map $M^n\to\mathbb{S}^2\times\mathbb{T}^{n-2}$ endowed with a metric $g$ with $\scal_g\geq n(n-1)$. We use Theorem \ref{thmZhu} to get an homologically non-trivial $\Sigma^2\subset M^n$ and positive smooth functions $f_1,f_2,\dots,f_{n-2}$ such that $\Sigma^2\times\mathbb{T}^{n-2}$ endowed with the metric $\tilde{g}=g_{|\Sigma}+\sum_{i=1}^{n-2}f_i^2d\theta_i^2$ satisfies $\scal_{\tilde{g}}\geq n(n-1)$. 
    Note that since $(\Sigma^2,g_{|\Sigma^2})$ is isometrically immersed in $(M^n)$, the extrinsic diameter of $\Sigma^2$ as a subset of $(M^n,g)$ denoted by $\diam_g(\Sigma^2)$ is smaller than its intrinsic diameter $\diam_{g_{|\Sigma^2}}(\Sigma^2)$. In short:
    \begin{equation}\label{ineqdiam}
        \diam_g(\Sigma^2)\leq \diam_{g_{|\Sigma^2}}(\Sigma^2).
    \end{equation}
    
    Applying Theorem \ref{thmGBM} to $(\Sigma^2\times\mathbb{T}^{n-2},\tilde g)$ gives that \begin{equation}\label{diam2pi}
        \diam_{g_{|\Sigma^2}}(\Sigma^2)\leq \tfrac{2\pi}{n}.
    \end{equation}
    Hence by \eqref{ineqdiam} and \eqref{diam2pi} we get:
    \begin{equation}\label{extdiam}
        \diam_{g}(\Sigma^2)\leq \tfrac{2\pi}{n}.
    \end{equation}
    
    Now assume $(M^n,g)$ has injectivity radius bigger than $\tfrac{2\pi}{n}$. Then for any $p\in\Sigma^2$, $B_M(p,\tfrac{2\pi}{n})$ is contractible. The diameter estimate $\eqref{extdiam}$ implies that $\Sigma^2\subset B_M(p,\tfrac{2\pi}{n})$ thus $\Sigma^2$ is trivial in homology which contradicts the definition of $\Sigma^2$.
\end{proof}
\section{Injectivity radius bounds for 3-manifolds with rich topology}
\label{dim3}

Before proving Theorem \ref{thm3}, we first state a diameter estimate for stable minimal immersions of 2-spheres in manifolds with positive scalar curvature initially observed by Schoen and Yau (see Lemma 16 in \cite{ChodoshLi}). We include here a proof to show how this can be obtained directly from Gromov's Theorem \ref{thmGBM}.
\begin{lemma}\label{diamEst}
    Let $(M^3,g)$ be a 3-manifold with $\scal_g\geq 6$ and let $\iota:\mathbb{S}^2\to M^n$ be a stable minimal immersion, then: 
    \[\diam(\mathbb{S}^2,\iota^*g)\leq\frac{2\pi}{3}.\]
\end{lemma}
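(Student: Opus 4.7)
My plan is to reduce the lemma to Theorem~\ref{thmGBM} in the case $n=3$ by performing the Fischer-Colbrie--Schoen warped product construction, which converts stability of $\iota$ into a positive scalar curvature metric on $\mathbb{S}^2\times\mathbb{S}^1$ over $(\mathbb{S}^2,\iota^*g)$.

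Since $\mathbb{S}^2$ is simply connected and orientable, $\iota$ is two-sided and admits a global pulled back unit normal $\nu$. Writing $A$ for the second fundamental form, the Jacobi operator is
\[ L = -\Delta_{\iota^*g} - (|A|^2 + \Ric_g(\nu,\nu)), \]
and stability means $L\geq 0$. I would take the positive first eigenfunction $u\colon \mathbb{S}^2\to\mathbb{R}_{>0}$, with eigenvalue $\lambda_1\geq 0$, and use it as a warping factor on $\mathbb{S}^2\times\mathbb{S}^1$ by setting $\tilde g = \iota^*g + u^2\,d\theta^2$.

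The key step is to check that $\scal_{\tilde g}\geq 6$. Combining the warped product identity $\scal_{\tilde g} = \scal_{\iota^*g} - 2(\Delta_{\iota^*g} u)/u$, the eigenvalue equation $-\Delta_{\iota^*g} u = (|A|^2 + \Ric_g(\nu,\nu) + \lambda_1)u$, and the Gauss equation $\scal_{\iota^*g} = \scal_g - 2\Ric_g(\nu,\nu) - |A|^2$ (in which the mean curvature drops out because $\iota$ is minimal), the Ricci cross terms cancel and the $|A|^2$ terms combine favorably, yielding the clean identity
\[ \scal_{\tilde g} = \scal_g + |A|^2 + 2\lambda_1 \geq 6. \]
At that point $\tilde g$ has exactly the structure required to feed into Theorem~\ref{thmGBM} for $n=3$, which gives $\diam(\mathbb{S}^2,\iota^*g)\leq 2\pi/3$.

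The main thing demanding care is bookkeeping the sign conventions in the three ingredients (Jacobi operator, warped product scalar curvature, Gauss equation) so that the cancellation goes through as claimed; the underlying computation is classical. The fact that $\iota$ is only an immersion, not an embedding, is harmless, since the construction is entirely intrinsic to the domain $\mathbb{S}^2$: the unit normal, second fundamental form and value of $\Ric_g(\nu,\nu)$ are all used in their pulled back form.
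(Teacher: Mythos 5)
Your proof is correct and is essentially the paper's own argument: Fischer-Colbrie--Schoen symmetrization via the positive first eigenfunction of the Jacobi operator, the warped-product scalar curvature identity giving $\scal_{\tilde g}=\scal_g+|A|^2+2\lambda_1\geq 6$, and then Theorem~\ref{thmGBM} with $n=3$. The only difference is cosmetic — you write the stability operator with $|A|^2+\Ric_g(\nu,\nu)$ and invoke the traced Gauss equation explicitly, whereas the paper states the Jacobi operator already in the Gauss-rewritten form $-\Delta-\tfrac12(\scal_g-\scal_{\iota^*g}+|A|^2)$.
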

\begin{proof}
    As in Gromov's first proof of Theorem \ref{ineqGro} in \cite{GromovM}, we will use Fischer-Colbrie--Schoen symmetrization.
    The stability of $\iota$ means that the Jacobi operator $J=-\Delta_{\iota^*g}-\tfrac{1}{2}\left(\scal_g-\scal_{\iota^*g}+|A|^2\right)$ is nonnegative.
    
    Let $f$ be the first eigenfunction of $J$ (which doesn't vanish since $J$ is a Schrödinger operator) and consider the metric $\tilde g=\iota^*g+f^2d\theta^2$ on $\mathbb{S}^2\times\mathbb{T}^1$. A classical computation shows that:
    \[\scal_{\tilde g}=\scal_{\iota^*g}-\frac{2\Delta_{\iota^*g}f}{f}\geq 6.\]
    
    Now, as in the proof of Theorem \ref{thmn}, we can apply Gromov's Theorem \ref{thmGBM} to $(\mathbb{S}^2\times\mathbb{T}^1,\tilde g)$ to show that $(\mathbb{S}^2,\iota^*g)$ has diameter at most $\tfrac{2\pi}{3}$.
\end{proof}

We are now ready to prove Theorem \ref{thm3}.
\begin{proof}[Proof of Theorem \ref{thm3}]
    First, we note that by going to the orientation cover if needed we can assume that $M^3$ is orientable.

    Let $(M^3,g)$ be a compact $3$-manifold with $\scal_g\geq 6$ and: 
    \begin{equation}\label{injassump}
        \inj_g>\tfrac{2\pi}{3}.
    \end{equation} 
    
    Since $(M^3,g)$ has positive scalar curvature, it follows from Perelman's solution to the Geometrization conjecture (\cite{Perelman}) that $M^3$ is diffeomorphic to a connected sum:
    \begin{equation*}\label{connsum}
        M^3\simeq P_1\# \cdots \# P_k
    \end{equation*}
    where the $P_i$ are either non simply connected spherical 3-manifolds or $\mathbb{S}^2\times\mathbb{T}^1$, note that we can have $k=0$ in which case $M^3$ is a 3-sphere and there is nothing to prove.
    
    If one of the $P_i$'s is $\mathbb{S}^2\times\mathbb{T}^1$, then by collapsing all the other factors we can build a non zero degree map $M^3\to \mathbb{S}^2\times\mathbb{T}^1$. Thus Theorem \ref{thmn} applies and $\inj_g\leq\tfrac{2\pi}{3}$. This contradicts \eqref{injassump}.
    
    Hence all the $P_i$'s are non simply connected spherical manifolds. 
    
    We will now prove that there is only one summand in the connected sum $\eqref{connsum}$. To see this note that if $k\geq 2$ then inside a neck $\mathbb{S}^2\times[-1,1]$ used to perform the connected sum one can find an embedded $2$-sphere which does not bound a $3$-ball. Now by Theorem 4.1 of \cite{HassScott}, we can minimize area among all such embedded spheres to obtain a stable minimal immersion $\iota:\mathbb{S}^2\to (M^3,g)$ (though it is not useful for our purpose, Hass and Scott actually prove that $\iota$ is either an embedding or a double cover of a 1-sided projective plane). 
    
     We can now apply Lemma \ref{diamEst} to show that
     $\diam(\mathbb{S}^2,\iota^*g)\leq\frac{2\pi}{3}$. Thus the image
     of $\iota:\mathbb{S}^2\to M^3$ is contained in a ball of radius $\tfrac{2\pi}{3}$. Since $\iota$ is homotopically non trivial this implies that the injectivity radius of $(M,g)$ is less than $\frac{2\pi}{3}$. This contradicts \eqref{injassump} and proves that $k=1$.
    
    We now have that $M^3$ is a spherical 3-manifold, and we can write it as a quotient $\mathbb{M}^3=\mathbb{S}^3/\Gamma$ for some finite subgroup $\Gamma\subset SO(4)$ of fixed point free isometries of the round $\mathbb{S}^3$.
    
    We will now show that $\Gamma$ has odd order. Assume to the contrary that $\Gamma$ has even order, then a classical exercise in group theory shows that $\Gamma$ contains an element $\sigma$ of order $2$. Since in $SO(4)$ we have $\sigma^2=I_4$, the eigenvalues of $\sigma$ are all equal to $\pm 1$. Since $\sigma$ acts on $\mathbb{S}^3$ without fixed points, $1$ cannot be an eigenvalue of $\sigma$. Hence $\sigma=-I_4$. 
    
    This shows that if $\Gamma$ has even order then $\{\pm I_4\}\subset \Gamma$. This gives rise to a covering $\Pi:\mathbb{RP}^3\to M^3$. Thus $\tilde g=\Pi^*g$ is a metric on $\mathbb{RP}^3$ with scalar curvature at least $6$. By \cite[Theorem 1]{BBEN}, this implies that the $1$-systole of $(\mathbb{RP}^3,\tilde g)$ is at most $\pi$, and thus that its injectivity radius is at most $\pi/2$. Since the injectivity radius can only increase by coverings, $\inj_g\leq\pi/2$ which contradicts \eqref{injassump}. Thus $\Gamma$ has odd order.
    
    All that needs to be shown now is that $M^3=\mathbb{S}^3/\Gamma$ is a lens space, this follows from the classification of spherical 3-manifolds. Inspecting section 7.5 of \cite{WolfSpaces} we see that all spherical 3-manifolds which are not lens spaces have fundamental group which contain binary dihedral, tetrahedral, octahedral or icosahedral groups as subgroups, thus all those non lens space spherical 3-manifolds will have even order fundamental group. Thus the only possible spherical 3-manifolds with odd order fundamental group are lens spaces.

\end{proof}
\begin{remark}
  It is a bit unsatisfying that the statement of \autoref{thm3} is
  not ``If $(M^3,g)$ has $\scal_g\geq 6$ and $\inj_g>2\pi/3$ then
  $M^3$ is homeomorphic to $\mathbb{S}^3$'' however my attempts at
  removing the odd order lens spaces from the statement have been
  inconclusive. One could try to use an index 1 minimal Clifford torus
  $T\subset L(p,q)$, whose area is less than $16\pi/3$ by
  \cite{MarquesNeves}. This gives an upper bound on the systole of $T$
  by Loewner's inequality however since  $T\subset L(p,q)$ is not
  injective at the level of $\pi_1$ this doesn't bound the systole of
  $L(p,q)$. Another approach could be to notice that if the
  injectivity radius of $L(p,q)$ is bigger than $2\pi/3$ then this
  gives a metric with diameter at least $4\pi/3$ on its universal
  cover, however \autoref{thm3diam} is not tight enough to be used there.
\end{remark}
\section{Injectivity radius bounds for 3-manifolds with large diameter}
\label{dim3diam}

In this section we prove Theorem \ref{thm3diam}.

We first state an elementary topological lemma, certainly well known, whose simple proof is
only included because of our inability to locate it somewhere in the literature:

\begin{lemma}\label{lemtop}
	Let $O^n$ be a smooth connected manifold and let $\Sigma^{n-1}$ be an hypersurface such that $O\backslash\Sigma$ has two non-compact connected components. Then $[\Sigma]\ne 0\in H_{n-1}(O)$.
\end{lemma}
\begin{proof}
	Since $\Sigma$ separates it is two-sided. Let $O_-$ and $O_+$ be the two components of $O\backslash \Sigma$. For an auxiliary complete metric $g$ on $O$, let $f:O\to\mathbb{R}$ be such that:
	\begin{itemize}
		\item in a small enough metric tubular neighborhood of $\Sigma$, $f(x)=\pm d_g(x,\Sigma)$ if $x\in O_\pm$.
		\item $f$ is smooth and proper.
	\end{itemize}
	This can be achieved by mollifying the signed distance function to $\Sigma$ which is proper since both $O_-$ and $O_+$ are not compact. We denote $\Sigma_t=\{f=t\}$.

	Let $\chi:\mathbb{R}\to\mathbb{R}$ be a smooth function supported in a small enough neighborhood of $0$ and such that $\int_\mathbb{R}\chi(t)dt=1$. Set $\eta=f^*(\chi(t)dt)$. Note that since a complete geodesic from $M_-$ to $M_+$ gives rise to a right inverse to $f$ and $\chi(t)dt$ is a generator of $H^1_c(\mathbb{R})$, $[\eta]\ne 0\in H^1_c(O)$.

		Let $\alpha$ be a smooth closed $n-1$-form, we have:
	\[\int_O\eta\wedge\alpha=\int_{\mathbb{R}}\chi(t)\left(\int_{\Sigma_t}\alpha\right)dt=\int_{\mathbb{R}}\chi(t)\left(\int_\Sigma\alpha\right)dt=\int_\Sigma\alpha.\]
since $[\Sigma_t]=[\Sigma]$ and $d\alpha=0$. Thus $[\eta]$ and $[\Sigma]$ are Poincaré dual to each other. 

Since $[\eta] \ne 0\in H^1_c(O)$, $[\Sigma]\ne 0\in H_{n-1}(O)$. 
\end{proof}

%\begin{lemma}\label{lemtop}
%    Let $(M^n,g)$ be a compact manifold and $p,q\in M^n$. Let
%    $r<\min\left(\inj_g,d_g(p,q)\right)$. Then $[S(p,r)]\neq 0 \in H_{n-1}(M\backslash\{p,q\})$. 
%\end{lemma}
%\begin{proof}
%    Since $r<\inj_g$, all balls of radius $r$ are diffeomorphic to euclidean ball and all spheres of radius $r$ are diffeomorphic to standard spheres.
%
%    Let $\hat M=M\backslash\{p,q\}$, $U=B(p,r)\cup B(q,r)$ and $\Sigma=S(p,r)$. We will show that $[\Sigma]\neq 0$ in $H_{n-1}(\hat M)$. The Mayer Vietoris exact sequence for the decomposition $M=\hat M\cup U$ gives:
%    \[\cdots\to H_n(M)\to H_{n-1}(\hat M\cap U)\to H_{n-1}(\hat M)\oplus H_{n-1}(U)\to \cdots\]
%    Note that $M\cap U$ is homotopy equivalent to $\Sigma\times\{-1,1\}$ and that the map $H_n(M)\to H_{n-1}(\hat M\cap U)$ sends the fundamental class $[M^n]$ to $[(\Sigma,1)]-[(\Sigma,-1)]$. 
%    
%    Since the sequence is exact this implies that the image of $[(\Sigma,1)]+[(\Sigma,-1)]$ by $H_{n-1}(\hat M\cap U)\to H_{n-1}(\hat M)\oplus H_{n-1}(U)$ is not zero. Since $H_{n-1}(U)=0$ this implies that $[\Sigma]$ is not zero in $H_{n-1}(\hat M)$.
%\end{proof}
We can now move to the proof of Theorem \ref{thm3diam}.
\begin{proof}[Proof of Theorem \ref{thm3diam}.]
    Let $D=\diam(M^3,g)$ and let $p,q\in M^3$ be such that $d_g(p,q)=D$. For $r<\inj_g$, we set $\hat{M}=M\backslash\{p,q\}$ and $\tilde M=M\backslash\{B(p,r)\cup B(q,r)\}$.
    
    $\tilde M$ has two spherical boundary components $\partial_+\tilde M=S(p,r)$ and $\partial_-\tilde M=S(q,r)$ which satisfy:
    $d_g(\partial_-\tilde M,\partial_+\tilde M)=D-2r$.
    
    Assume that $D-2r>2\pi/3$, then $\delta=6-\frac{8\pi^2}{3(D-2r)^2}>0$ and we can apply Theorem \ref{thm-gro-mub} to $(\tilde M,g)$ to get a compact surface $\Sigma\subset \tilde M$ homologous to $S(p,r)$ in $\hat M$ and a positive function $f:\Sigma\to \mathbb{R}$ such that $\tilde g=g_{|\Sigma}+f^2d\theta^2$ has $\scal_{\tilde g}\geq\delta$ and $\Sigma\in [\partial_- M]$.
    
    We can now apply Theorem \ref{thmGBM} (after proper rescaling) to show that:
    \begin{equation}\label{diamdelta}
        \diam(\Sigma,g)\leq \frac{2\pi}{3\sqrt{\delta/6}}=\frac{2\pi}{3\sqrt{1-\frac{4\pi^2}{9(D-2r)^2}}}
    \end{equation}
     Since $\Sigma\subset\tilde M$, for any $x\in\Sigma$, $d_g(x,\{p,q\})>r$ and thus $B(x,r)\subset\hat M$. 

     Assume $r\geq \diam(\Sigma,g)$ then for any $x\in\Sigma$,
     $\Sigma$ is included in $B(x,r)$ which is a topological ball contained in
     $\hat M$ since $r$ is smaller than the injectivity radius.
    Hence $\Sigma$ is contractible in $\hat M$, which contradicts
    Lemma \ref{lemtop} with $O=\hat M$. Thus :
    \[r<\diam(\Sigma,g)\leq \frac{2\pi}{3\sqrt{1-\frac{4\pi^2}{9(D-2r)^2}}} \]
    Solving for $D$ we get the inequality:
    \begin{equation}
    D<2r+\frac{2\pi}{3\sqrt{1-\tfrac{4\pi^2}{9r^2}}}.\label{eq:diam-upper}
  \end{equation}
    We have proved that if $r<\inj_g$ and $D>2r+\frac{2\pi}{3}$ then
    \eqref{eq:diam-upper} holds.

    Hence by contraposition if $D\geq
    2r+\frac{2\pi}{3\sqrt{1-\tfrac{4\pi^2}{9r^2}}}$ then $r\geq\inj_g$
    or $D\leq 2r+\frac{2\pi}{3}$. Since for $r\in (2\pi/3,\pi)$,
    $\frac{2\pi}{3\sqrt{1-\tfrac{4\pi^2}{9r^2}}}>\frac{2\pi}{3}$, we
    get that $\inj_g\leq r$.
\end{proof}

\section{Injectivity radius of open 3-manifolds}
\label{sec:inject-radi-open}

In this section we prove Theorem \ref{thm3comp}. 
\begin{proof}[Proof of Theorem \ref{thm3comp}.]
  Fix a point $p$ in our open $(M^3,g)$ with $\scal_g\geq 6$. Consider
  a smooth proper $1$-Lipschitz function $\rho$ such that
  $|\rho-d(p,\cdot)|<1$. For $R>2+\tfrac{2\pi}{3}$ such that $R$ and
  $2R$ are regular values of $\rho$ we set:
  \[\tilde M=\rho^{-1}([R,2R]).\]
  Then $\partial \tilde M= \rho^{-1}({R})\cup \rho^{-1}({2R})$ and
  $[\rho^{-1}({R})]=[S(p,r)]$ in $H_2(M\backslash\{p\})$ for $r$ small
  enough since $S(p,r)$ and $\rho^{-1}({R})$ together bound $\rho^{-1}((-\inf,R))\backslash B(p,r)$. Moreover we have:
  \[d_g(\rho^{-1}({2R}), \rho^{-1}({R}))\geq R-2>2\pi/3.\]
  
  Thus we can apply Theorem \ref{thm-gro-mub} to $(\tilde M,g)$ with
  $\delta=6-\frac{8\pi^2}{3(R-2)^2}$ to get $\Sigma^2\subset\tilde M$ in the homology
  class of $\rho^{-1}(\{R\})$ (and thus of $S(p,r)$) and a function $f: \Sigma^2\to\mathbb{R}$ such
  that $g_{|\Sigma}+f^2d\theta^2$ has scalar curvature at least
  $\delta$. As in section \ref{dim3diam}, we get that:
  \[\diam(\Sigma^2,g)\leq\frac{2\pi}{3\sqrt{1-\frac{4\pi^2}{9(R-2)^2}}}.\]
  
  We now argue that $\inj_g\leq
  \diam(\Sigma^2,g)$. Let $x\in\Sigma^2\subset\tilde M$. Then
  $\Sigma^2\subset \overline B(x, \diam(\Sigma^2,g))\subset
  M\backslash\{p\}$ and, since $\Sigma^2$ is not contractible in
  $M\backslash\{p\}$ by Lemma \ref{lemtop}, $\overline B(x, \diam(\Sigma^2,g))$ is not
  contractible, hence:
  \[\inj_g\leq
    \diam(\Sigma^2,g)\leq\frac{2\pi}{3\sqrt{1-\frac{4\pi^2}{9(R-2)^2}}}.\]
  Picking $R$ as big as we want we get that:
  \[\inj_g\leq\frac{2\pi}{3}.\]
\end{proof}

\section{Injectivity radius of
  $\mathbb{S}^2\times\mathbb{T}^{n-3}\times\mathbb{R}$ and
  $\mathbb{S}^2\times\mathbb{T}^{n-4}\times\mathbb{R}^2$ }
\label{sec:inject-radi-mathbbs2}

Once again we start with a topological lemma:
\begin{lemma}\label{lemtopn}
  Let $M^n$ be a smooth $n$-manifold, $X^{n-1}$ be a compact $(n-1)$-manifold
  and $F:M\to X\times\mathbb{R}$ be smooth proper non zero-degree
  map. For any $p\in M$, set $F(p)=(x(p),r(p))\in X\times
  \mathbb{R}$. Let $r_0$ be a regular value of $r:M\to\mathbb{R}$,
  then for any $\Sigma\in [r^{-1}(r_0)]$, $x_{|\Sigma}:\Sigma\to X$
  has non-zero degree.
\end{lemma}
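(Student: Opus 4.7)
The plan is to evaluate $\deg F$ by integrating the pull-back of a compactly supported top form on $X\times\mathbb{R}$ concentrated near the slab $X\times\{r_0\}$, and to recognize the result as $\deg(x|_{r^{-1}(r_0)})$; the passage to an arbitrary $\Sigma\in [r^{-1}(r_0)]$ is then a one-line homological consequence. The first input is that since $X$ is compact, the projection $\pi_\mathbb{R}:X\times\mathbb{R}\to\mathbb{R}$ is proper, hence $r=\pi_\mathbb{R}\circ F$ is proper. As $r_0$ is a regular value, there is an open interval $I=(r_0-\eps,r_0+\eps)$ consisting of regular values of $r$, and Ehresmann's fibration theorem applied to the proper submersion $r|_{r^{-1}(I)}$ identifies $r^{-1}(I)$ with the product $r^{-1}(r_0)\times I$, on which $r$ becomes projection to the second factor.

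With $M$ and $X$ oriented, I would choose a top form $\omega_X$ on $X$ with $\int_X\omega_X=1$ and a nonnegative bump $\phi\in C^\infty_c(I)$ with $\int_\mathbb{R}\phi=1$, and set $\Omega=\omega_X\wedge\phi(t)\,dt$. This $\Omega$ is a compactly supported $n$-form on $X\times\mathbb{R}$ of total mass $1$, so the standard degree formula for proper maps gives $\int_M F^*\Omega=\deg F\neq 0$. On the other hand $F^*\Omega=x^*\omega_X\wedge\phi(r)\,dr$ is supported in $r^{-1}(I)$, and Fubini's theorem applied in the Ehresmann product chart yields
\begin{align*}
  \int_M F^*\Omega
  &=\int_I \phi(t)\left(\int_{r^{-1}(t)} x^*\omega_X\right)dt\\
  &=\int_I \phi(t)\deg(x|_{r^{-1}(t)})\,dt.
\end{align*}
The integer-valued function $t\mapsto\deg(x|_{r^{-1}(t)})$ is constant on $I$, either directly because the fibers form a smooth product, or via the cobordism $r^{-1}([t_0,t_1])$, which makes $r^{-1}(t_0)$ and $r^{-1}(t_1)$ homologous in $M$ and hence their $x$-images homologous in $X$. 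Combined with $\int_\mathbb{R}\phi=1$ this gives $\deg(x|_{r^{-1}(r_0)})=\deg F\neq 0$.

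Finally, for any $\Sigma\in [r^{-1}(r_0)]$ we have $x_*[\Sigma]=x_*[r^{-1}(r_0)]$ in $H_{n-1}(X)$, and since the degree of a map from a closed oriented $(n-1)$-manifold to $X$ is precisely the coefficient of the fundamental class of $X$ in this pushforward, we conclude $\deg(x|_{\Sigma})=\deg(x|_{r^{-1}(r_0)})\neq 0$. The only real work is the coarea/Fubini step above; the only subtlety is keeping orientation conventions consistent so that the cohomological definition of $\deg F$ used in the hypothesis and the integer degree of $x|_{r^{-1}(r_0)}$ appearing in the conclusion are read off from the same formula.
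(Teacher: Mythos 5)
Your proposal is correct and follows essentially the same route as the paper: evaluate the (nonzero) degree of $F$ on a compactly supported top form concentrated near the slice $r^{-1}(r_0)$, use the local product structure coming from the regular value (the paper trivializes $r^{-1}((r_0-\eps,r_0+\eps))$ just as you do via Ehresmann) to convert the integral into a fiberwise degree of $x$, and transfer the conclusion to any $\Sigma\in[r^{-1}(r_0)]$ by homology invariance of the integral of $x^*$ of a top class on $X$. The only cosmetic difference is that the paper lets the bump function vary to conclude the fiber integral is constant, whereas you invoke constancy of the fiberwise degree directly; both are fine.
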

\begin{proof}
  Since $F$ has non-zero degree there exists $k\neq 0$ such for any
  compactly supported $n$-form $\omega$ on $X\times\mathbb{R}$:
  \begin{equation}
  \int_MF^*\omega=k\int_{X\times\mathbb{R}}\omega.\label{eq:deg}
\end{equation}
  Let $\Sigma_0=r^{-1}(r_0)$, $\Sigma_0$ is a compact smooth hypersurface. Since $r_0$ is a regular value of
  $r$, $F$ is a submersion from $M_\varepsilon=r^{-1}((r_0-\varepsilon,
  r_0+\varepsilon))$ for some positive $\varepsilon$, hence we can
  assume that $M_\varepsilon$ is diffeomorphic to $\Sigma_0\times (r_0-\varepsilon,
  r_0+\varepsilon)$ and that in these coordinates, $F$ can be written as:
  \begin{align*}
    F:M_\varepsilon\simeq \Sigma_0\times (r_0-\varepsilon,
    r_0+\varepsilon) &\to X\times\mathbb{R}\\
    (s,\rho)&\mapsto (x(s,\rho),\rho).
  \end{align*}
  
  Now let $\chi: (r_0-\varepsilon,
    r_0+\varepsilon)\to\mathbb{R}$ be a compactly
    supported function. Let $\xi$ be an $n-1$-form on $X$ which is not zero in
    $H^{n-1}(X)$. Then, then writing $t$ for the coordinate on the
    $\mathbb{R}$ factor we set $\omega=\chi(t)(\xi\wedge dt)$.  Then
    $\int_MF^*\omega=k\int_{X\times\mathbb{R}}\omega$ by \eqref{eq:deg}. We can compute:
    \[\int_{X\times\mathbb{R}}\omega=\int_{X\times\mathbb{R}}\chi(t)(\xi\wedge dt)=\int_\mathbb{R}\chi(t)
      dt\int_X\xi\]
    and:
    \[\int_MF^{*}\omega=\int_{M_\varepsilon}F^*(\chi(t)(\xi\wedge
      dt))=
      \int_{ (r_0-\varepsilon,
        r_0+\varepsilon)}\chi(\rho)\left(\int_{\Sigma_0}x^*\xi(s,\rho)\right)d\rho.\]
    
This shows that for any $\chi\in C^\infty_0((r_0-\varepsilon,
r_0+\varepsilon))$:
\[\int_{ (r_0-\varepsilon,
    r_0+\varepsilon)}\chi(t)\left(k\int_X\xi- \int_{\Sigma_0}x^*\xi(s,t)\right)dt=0\]
and thus for any $t\in (r_0-\varepsilon,
r_0+\varepsilon)$:
\[\int_{\Sigma_0}x^*\xi(s,t)=k\int_X\xi\neq 0.\]
For $t=r_0$, we get that $\int_{\Sigma_0}x^*\xi=k\int_X\xi\neq 0$ which
proves that $x|_{\Sigma_0}:\Sigma_0\to X$ has nonzero degree.

Now for any $\Sigma\in [\Sigma_0]$,
$\int_{\Sigma}x^*\xi=\int_{\Sigma_0}x^*\xi\neq 0$ hence  $x|_{\Sigma}:\Sigma\to X$
  has non-zero degree.
\end{proof}

We can now prove Theorem \ref{thmn} for $k=1$ or $2$.

\begin{proof}[Proof of Theorem \ref{thmn} for k=1,2.]
  For $k=1$, consider a proper non zerop degree map
  $F:(M^n,g)\to\mathbb{S}^2\times\mathbb{T}^{n-1}\times\mathbb{R}$ and let
  $r:M^n\to\mathbb{R}$ be the last coordinate of $f$. Let
  $L>0$ and set $M_L=r^{-1}([-L,L])$. Then $M_L$ is compact with
  boundary components $\partial_+M_{L}=r^{-1}(L)$ and
  $\partial_-M_{L}=r^{-1}(-L)$. Since $r$ is proper and $g$ is
  complete:
  \[d_g\left(\partial_-M_{L},\partial_+M_{L}\right)\xrightarrow{L\to +\infty}+\infty.\]

  By Sard's theorem, we can find an
  increasing sequence $(L_k)$ diverging to $+\infty$ such that $L_k$
  and $-L_k$ are regular values of $r$.

  We now apply Theorem \ref{thm-gro-mub} to $(M_{L_k},g)$ to find an hypersurface
  $\Sigma_k\in [\partial_+M_{L_k}]$ such that:
  \begin{itemize}
  \item By Lemma \ref{lemtopn}, $\pi\circ
    F_{|\Sigma_k}:\Sigma_k\to\mathbb{S}^2\times\mathbb{T}^{n-1}$ has
    non-zero degree where
    $\pi:\mathbb{S}^2\times\mathbb{T}^{n-1}\times\mathbb{R}\to\mathbb{S}^2\times\mathbb{T}^{n-1}$
    is the natural projection.
  \item there exists a smooth function $f_k:\Sigma\to (0,+\infty)$ such
    that $(\tilde M_k,\tilde
    g_k)=(\Sigma_k\times\mathbb{T}^1,g_{|\Sigma_k}+f_k^2d\theta^2)$ has
    $\scal_{\tilde g_k}\geq \delta_k$ where:
    \[\delta_k\xrightarrow{k\to+\infty} n(n-1).\]    
  \end{itemize}
  Note that $\tilde F=(\pi\circ
    F_{|\Sigma_k},\id_{\mathbb{T}^1}):\tilde M_k\to
    \mathbb{S}^2\times\mathbb{T}^{n}$ has non zero degree.
  
  We then apply Zhu's Theorem \ref{thmZhu} to $(\tilde M_k,\tilde g_k)$ to get a
  $2$-sphere $S_k\subset \tilde M_k$ and $n-2$ positive functions
  $f_{1,k},\dots,f_{n-2,k}:S_k\to\mathbb{R}$ such that:
  \begin{itemize}
  \item $\int_{S_k}\tilde F^*\sigma\neq 0$ where $[\sigma]\in
    H^2(\mathbb{S}^2\times\mathbb{T}^{n-2})$ is the fundamental class of
    the $\mathbb{S}^2$ factor.
  \item $(\hat M_k,\hat
    g_k)=(S_k\times\mathbb{T}^{n-2},\tilde{g}_{|S_k}+\sum_{i=1}^{n-2}f_{i,k}^2d\theta_i^2)$
    has $\scal_{\hat g_k}\geq \delta_k$.
  \end{itemize}
  Moreover it follows from Zhu's proof of Theorem \ref{thmZhu} that
  the construction of $S_k$ respects the $\mathbb{T}^1$ symmetry of
  $(\tilde M_k,\tilde g_k)$, which implies that $S_k\subset\Sigma_k$
  and $\tilde g_{|S_k}=(g_{\Sigma_k})_{|S_k}=g_{|S_k}$.

  By Theorem \ref{thmGBM} applied to $(\hat M_k,\hat g_k)$ , we have that the diameter of
  $(S_k,g_{|S_k})$ is at most $D_k$ where
  $D_k\xrightarrow{k\to+\infty}\frac{2\pi}{n}$. Since $S_k$ is not
  contractible in $M$, this implies that $\inj_g\leq D_k$. Letting $k$
  go to infinity we get that $\inj_g\leq \frac{2\pi}{n}$.

  We now turn to the case of $k=2$ and assume that $F:(M,g)\to
  \mathbb{S}^2\times\mathbb{T}^{n-4}\times\mathbb{R}^2$ is proper and
  has nonzero degree. Let $(x,y)$ denote cartesian coordinates of
  $\mathbb{R}^2$ and set:
  \begin{align*}
    r:M &\to\mathbb{R}\\
    p&\mapsto x(F(p))^2+y(F(p))^2.
  \end{align*}
  $r$ is smooth and proper. Let $\eta\in (0,1)$ be a regular value of
  $r$. For $L>1$ we set $M_L=r^{-1}((\eta,L))$,
  $\partial_-M_L=r^{-1}(\eta)$ and $\partial_+M_L=r^{-1}(L)$ . Since $r$ is proper
  and $g$ is complete:
  \[d_g\left(\partial_-M_{L},\partial_+M_{L}\right)\xrightarrow{L\to +\infty}+\infty.\]
  Moreover if one set
  $\dot{M}=F^{-1}\left(\mathbb{S}^2\times\mathbb{T}^{n-4}\times(\mathbb{R}^2\backslash\{0\})\right)$. Then
  $F_{|\dot M}$ has nonzero degree as a map valued in
  $\mathbb{S}^2\times\mathbb{T}^{n-4}\times(\mathbb{R}^2\backslash\{0\})\simeq
  \mathbb{S}^2\times\mathbb{T}^{n-3}\times\mathbb{R}$. We thus can
  argue as in the $k=1$ case.
\end{proof}

\appendix
\section{Non optimality of products for Gromov's diameter estimate}
\label{sec:non-optim-prod}
We will show here that one can deform the product metric on $\mathbb{S}^2\times\mathbb{T}^{n-2}$ in a way which increases the scalar curvature while keeping the diameter of the $\mathbb{S}^2$ factor, which shows that Theorem \ref{ineqGro} cannot be improved to show that product metrics are optimal.

More precisely, we will build metrics $g=g_s$ on $\mathbb{S}^2$ and functions $b=b_s:\mathbb{S}^2\to (0,\infty)$ for $s\geq 0$ such that:
\begin{enumerate}
    \item $g_0$ is the round metric on $\mathbb{S}^2$ and $f_0=1$.
    \item $\diam(\mathbb{S}^2,g_s)=\pi$.
    \item the metric $\tilde{g}=\tilde{g}_s=g_s+b_s^2dx^2$ on $\mathbb{S}^2\times\mathbb{T}^{n-2}$ (where $dx^2$ is a flat metric on $\mathbb{T}^{n-2}$) has $\scal_{\tilde g}>2$ for $s>0$ small enough.
\end{enumerate}

We will choose normal coordinates $(r,\phi)$ on $\mathbb{S}^2$ in
which the round metric is written as $g_0=dr^2+\sin^2(r)d\phi^2$ for
$r\in (0,\pi)$ and $\phi \in (-\pi,\pi]$. We will look at metrics of
the form $\tilde{g}=dr^2+a^2(r)d\phi^2+b(r)^2dx^2$. 

We will add to our functions $a$ and $b$ the next Fourier mode while ensuring smoothness of $\tilde{g}$ by setting:
\begin{itemize}
    \item $a(r)=\frac{\sin r+\alpha\sin 3r}{1+3\alpha}$,
    \item $b(r)=1+\beta \sin^2 r$,
\end{itemize}
for $\alpha,\beta$ to be determined later. Note that for $\alpha\in
(-\tfrac{1}{3},1)$ and $\beta\in(-1,+\infty)$,
$\tilde{g}_s=dr^2+a^2(r)d\theta^2+b(r)^2dx^2$ is a smooth metric on
$\mathbb{S}^2\times\mathbb{T}^{n-2}$ and that since $(\mathbb{S}^2,dr^2+a^2(r)d\theta^2)$ has diameter at least $\pi$. The scalar curvature of $\tilde g$ is given by:
\begin{align*}
\scal_{\tilde{g}}=&-2\left(\frac{a'}{a}+(n-2)\frac{b'}{b}\right)'\\
  &-\left(\left(\frac{a'}{a}\right)^2
  +(n-2)\left(\frac{b'}{b}\right)^2\right)
-\left(\frac{a'}{a}+(n-2)\frac{b'}{b}\right)^2.
\end{align*}
This can be computed for instance with Cartan's method of moving
coframes applied to the orthonormal coframe $(dr,a(r)d\phi,b(r)dx_3,\dots,b(r)dx_n)$.

With the help of a computer algebra system\footnote{this computation
  and the subsequent computations and plots have been
  carried on SageMath 10.4, the reader can find these in the sage
  notebook \cite{notebook} which can be downloaded at \url{https://doi.org/10.5281/zenodo.13933633}.}, we can plug our expressions $a$ and $b$ inside the expression above for the scalar curvature. We get a lengthy formula depending on $r$, $\alpha$, $\beta$ and $n$. 

In Figure \ref{fig:plot-scal}, we plotted the scalar curvature of
$\tilde g$ for various choices of our parameters $\alpha$, $\beta$ and
$n$, see also \cite{notebook}. From this plot we can guess that $r\mapsto \scal_{\tilde g}$
attains its minimum at either $r=0$ or $r=\pi/2$. We can then use our
CAS to compute those values:
\begin{itemize}
    \item $\scal_{\tilde g}(0)=\frac{2-4\beta(n-2)(6\alpha+2)+54\alpha}{1+3\alpha}$
    \item $\scal_{\tilde g}(\pi/2)=\frac{2(1-9\alpha-3\beta-5\alpha\beta+2n(1-\alpha)\beta)}{(1-\alpha)(1+\beta)}$
\end{itemize}
\begin{figure}
    \centering
    \includegraphics[scale=0.75]{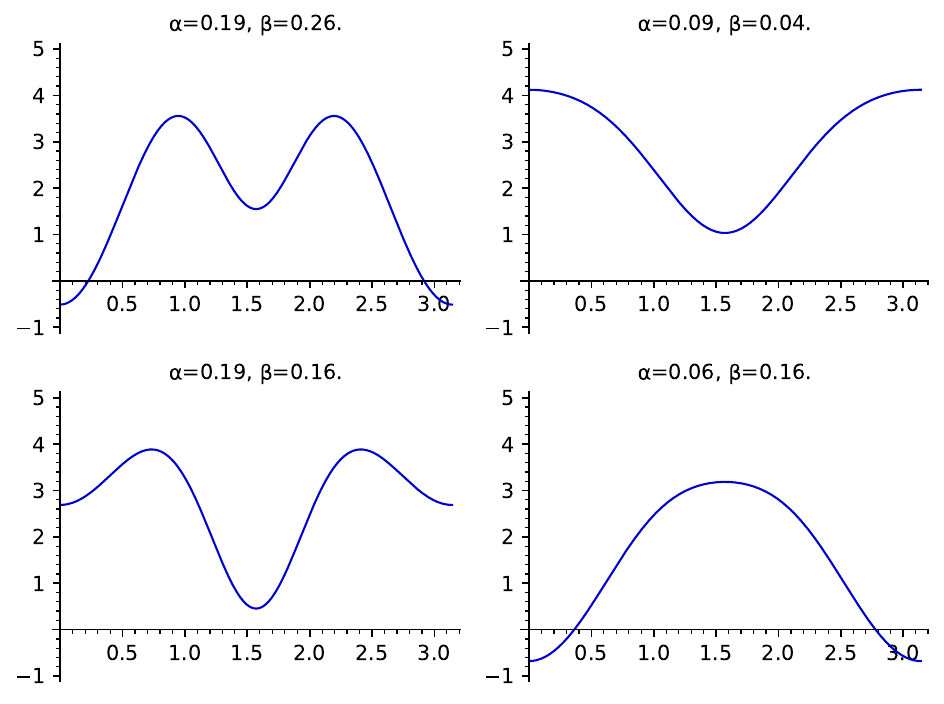}
    \caption{Example plots of the scalar curvature of $\tilde{g}$ as a
      function of $r$ for
      various choices of the $\alpha$ and $\beta$ parameters, for
      $n=6$, see \cite{notebook} for interactive plots with adjustable
      $(\alpha,\beta,n)$ values.}
    \label{fig:plot-scal}
\end{figure}
  
To find candidates for counter examples, we plotted for various values of $n$ the regions $\mathcal{R}_0=\{\scal_{\tilde g}(\pi/2)\geq 2\}$ and $\mathcal{R}_{\pi/2}=\{\scal_{\tilde g}(0)\geq 2\}$ in the $(\alpha,\beta)$ plane. Luckily enough the two regions seem to always intersect in what looks like a tiny convex region of the plane as seen on Figure \ref{fig:plot}.
\begin{figure}
    \centering
    \includegraphics[scale=0.75]{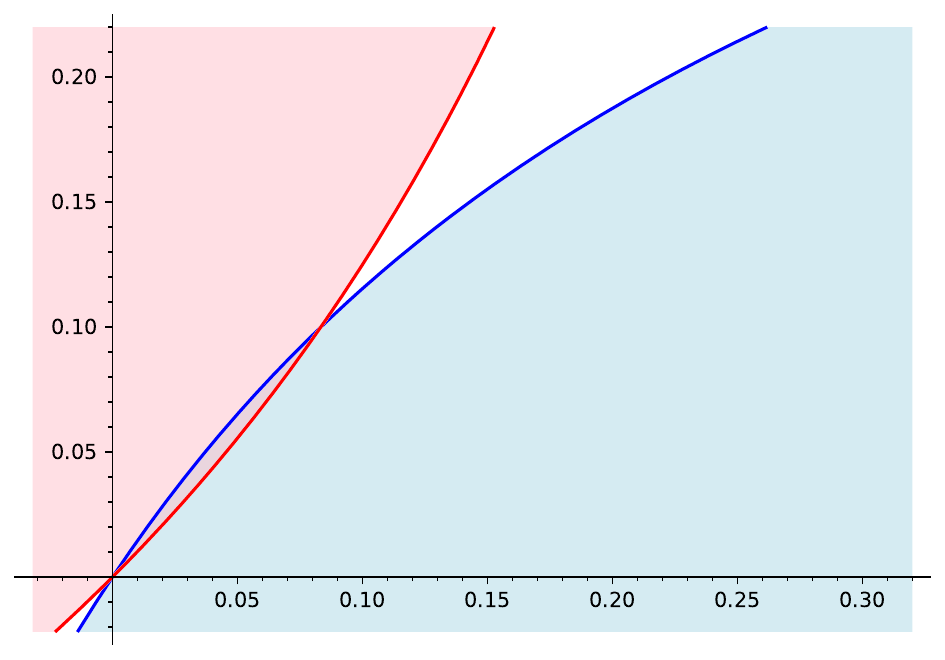}
    \caption{Plot of the $\mathcal{R}_0$ (blue) and
      $\mathcal{R}_{\pi/2}$ (pink) regions in the
      $(\alpha,\beta)$-plane for $n=4$, see \cite{notebook} for
      other values of $n$.}
    \label{fig:plot}
\end{figure}

In \cite{notebook} we solved the system:
\[\begin{cases}
\scal_{\tilde g}(\pi/2)= 2\\
\scal_{\tilde g}(0)= 2\\
\end{cases}
\]
for $\alpha$ and $\beta$, and obtained two solutions $(0,0)$ and $(\tfrac{n-2}{3(3n-2)},\tfrac{1}{2(n-1)})$. Since we have observed that our counterexample set $\mathcal{R}_{\pi/2}\cap\mathcal{R}_{0}$ should be convex in the $(\alpha,\beta)$ plane, the whole segment between these two points should consist of counterexamples. Hence we set
\[(\alpha,\beta)=s(\tfrac{n-2}{3(3n-2)},\tfrac{1}{2(n-1)})\]

What remains to be shown is that at least for $s$ small enough, we
actually have $\scal_{\tilde g_s}>2$.

In order to see this we can compute \cite{notebook}
\[\left .\frac{\partial\scal_{\tilde g_s}}{\partial s}\right |_{s=0}=\frac{(n-2)((7n-10)+(5n-14)\cos 2r)}{3(n-1)(3n-2)}\geq\frac{(n-2)(2n+4)}{3(n-1)(3n-2)}\]
which is strictly positive since $n\geq 3$. Hence for $s>0$ small
enough, $\scal_{\tilde g_s}>\scal_{\tilde g_0}=2$.

\begin{remark}
  While this shows that products are not optimal (even locally or infinetesimaly) for the diameter
  estimate given by \autoref{thmGBM},
  this doesn't rule out optimality of products for the injectivity
  radius estimate of \autoref{thmn}. This follows from the fact that
  while the length of the meridians of $\mathbb{S}^2$ remains fixed at
  $\pi$ for our counterexamples, the equator is shrunk when $s>0$
  which lowers the injectivity radius too fast with respect to the increase in
  minimal scalar curvature as is shown in \cite{notebook}.
\end{remark}

\bibliographystyle{alpha}
\bibliography{psc}

\begin{thebibliography}{BBEN10}

\bibitem[BBEN10]{BBEN}
H.~Bray, S.~Brendle, M.~Eichmair, and A.~Neves.
\newblock Area-minimizing projective planes in 3-manifolds.
\newblock {\em Commun. Pure Appl. Math.}, 63(9):1237--1247, 2010.

\bibitem[Cec20]{Cecchini}
Simone Cecchini.
\newblock A long neck principle for {Riemannian} spin manifolds with positive
  scalar curvature.
\newblock {\em Geom. Funct. Anal.}, 30(5):1183--1223, 2020.

\bibitem[CL23]{ChodoshLi}
Otis Chodosh and Chao Li.
\newblock Generalized soap bubbles and the topology of manifolds with positive
  scalar curvature, 2023.

\bibitem[Gre63]{GreenWieder}
L.~W. Green.
\newblock Auf-{Wiedersehens}-{Fl{\"a}chen}.
\newblock {\em Ann. Math. (2)}, 78:289--299, 1963.

\bibitem[Gro18]{GromovM}
Misha Gromov.
\newblock Metric inequalities with scalar curvature.
\newblock {\em Geom. Funct. Anal.}, 28(3):645--726, 2018.

\bibitem[Gro23a]{Gromov4}
Misha Gromov.
\newblock Four lectures on scalar curvature.
\newblock In {\em Perspectives in scalar curvature. In 2 volumes}, pages
  1--514. Singapore: World Scientific, 2023.

\bibitem[Gro23b]{gromov2023scalar}
Misha Gromov.
\newblock Scalar curvature, injectivity radius and immersions with small second
  fundamental forms, 2023.

\bibitem[HS88]{HassScott}
Joel Hass and Peter Scott.
\newblock The existence of least area surfaces in 3-manifolds.
\newblock {\em Trans. Am. Math. Soc.}, 310(1):87--114, 1988.

\bibitem[HXZ25]{HuXuZhang}
Qixuan Hu, Guoyi Xu, and Shuai Zhang.
\newblock The sharp diameter bound of stable minimal surfaces.
\newblock {\em The Journal of Geometric Analysis}, 35(7):197, 2025.

\bibitem[MN12]{MarquesNeves}
Fernando~C. Marques and Andr{\'e} Neves.
\newblock {Rigidity of min-max minimal spheres in three-manifolds}.
\newblock {\em Duke Mathematical Journal}, 161(14):2725 -- 2752, 2012.

\bibitem[Per03]{Perelman}
Grisha Perelman.
\newblock Ricci flow with surgery on three-manifolds.
\newblock {\em arXiv e-print service}, 2003:22, 2003.
\newblock Id/No 0303109.

\bibitem[Ric24]{notebook}
Thomas Richard.
\newblock {Increasing the scalar curvature of
  $\mathbb{S}^2\times\mathbb{T}^{n-2}$ while keeping the diameter of the
  $\mathbb{S}^2$ factor fixed.}
\newblock \url{https://doi.org/10.5281/zenodo.13933633}, October 2024.
\newblock SageMath 10.4 notebook.

\bibitem[TW19]{Tuschmann}
Wilderich Tuschmann and Michael Wiemeler.
\newblock Smooth stability and sphere theorems for manifolds and {Einstein}
  manifolds with positive scalar curvature.
\newblock {\em Commun. Anal. Geom.}, 27(2):491--509, 2019.

\bibitem[Wol11]{WolfSpaces}
Joseph~A. Wolf.
\newblock {\em Spaces of constant curvature}.
\newblock Providence, RI: AMS Chelsea Publishing, 6th ed. edition, 2011.

\bibitem[Zhu20]{Zhu}
Jintian Zhu.
\newblock Rigidity of area-minimizing {{\(2\)}}-spheres in {{\(n\)}}-manifolds
  with positive scalar curvature.
\newblock {\em Proc. Am. Math. Soc.}, 148(8):3479--3489, 2020.

\end{thebibliography}
\end{document}